\newif\ifRR
\providecommand{\texorpdfstring}[2]{#1}
\newtheorem{Rq}{Remark}
\newtheorem{Def}{Definition}
\newtheorem{proposition}{Proposition}
\newtheorem{lemma}[proposition]{Lemma}
\newcommand{\F}{\mathbb{F}}
\newcommand{\V}{\mathbb{V}}
\newcommand{\E}{\mathbb{E}}
\renewcommand{\P}{\mathbb{P}}
\newcommand{\G}{\mathcal{G}}
\newcommand{\N}{\mathcal{N}}
\newcommand{\X}{\mathcal{X}}
\newcommand{\GammaD}{\Gamma^\mathcal{L}}
\newcommand{\GammaP}{\Gamma^\mathcal{B}}
\newcommand{\XP}{X^\mathcal{B}}
\newcommand{\FP}{F^\mathcal{B}}
\newcommand{\1}{\mathds{1}}
\newcommand{\ind}[1]{\1_{\{#1\}}}
\newcommand{\x}{\>x}
\newcommand{\n}{^{\scriptscriptstyle (n)}}
\newcommand{\Psigma}{p_{\bm{\sigma}}}
\newcommand{\hP}{\mathcal{P}}
\def\egaldef{\stackrel{\mbox{\tiny def}}{=}}
\def\DD{\displaystyle}
\DeclareMathOperator*{\cov}{\mathrm{cov}}
\DeclareMathOperator*{\argmax}{\mathrm{argmax}}
\let\title=\RRetitle
\let\author=\RRauthor
\date{}
\title{Using Latent Binary Variables for Online Reconstruction of
  Large Scale Systems}
\author{Victorin Martin%
  \thanks{Mines-Paristech, Paris, France, e-mail: \texttt{victorin.martin@mines-paristech.fr}}
    \and Jean-Marc Lasgouttes\thanks{Inria, Imara Project-Team,
      e-mail: \texttt{jean-marc.lasgouttes@inria.fr}}
  \and Cyril Furtlehner\thanks{Inria, TAO Project-Team, e-mail: \texttt{cyril.furtlehner@inria.fr}}
  }
\begin{document}

\ifRR
\makeRR
\else
\maketitle

\begin{abstract}
We propose a probabilistic graphical model realizing a minimal encoding of real
variables dependencies based on possibly incomplete observation  and an empirical
cumulative distribution function per variable. The target application is a large scale partially observed system, like
e.g.\ a traffic network, where a small proportion of real valued variables are
observed, and the other variables have to be predicted. Our design
objective is therefore to have good scalability in a  real-time
setting. Instead of
attempting to encode the dependencies of the system directly in the
description space, we propose a way to encode them in a latent space of binary
variables, reflecting a rough perception of the observable (congested/non-congested
for a traffic road). The method relies in part on message passing algorithms, i.e.\
belief propagation, but the core of the work concerns the definition of
meaningful latent variables associated to the variables of
interest and their pairwise dependencies. Numerical experiments demonstrate
the applicability of the method in practice.
\end{abstract}

\textbf{Keywords:} latent variables; Markov random field; belief propagation;
inference; soft constraints.
\fi

\section{Introduction}\label{sec:intro}
Predicting behavior of large scale complex stochastic systems
is a relevant question in many different situations where
a (communication, energy, transportation, social, economic\ldots) network
evolves for instance with respect to some random demand and limited supply.
This remains to a large extent an open and considerable problem, especially for partially observed
systems with strong correlations (see e.g.~\citet{Boyen}), though
efficient  methods, like Kalman and, by extension, particle filtering, see e.g.~\citet{Doucet}, exist,
but with limited scalability. 

In the example which motivates this work, road traffic reconstruction
from floating car data, the system is partially observed and the goal
is to predict the complete state of the traffic network, which is
represented as a high-dimensional real valued vector of travel times or
alternatively speeds or densities. State of the art methods in this field
exploit both temporal and spatial correlations with multivariate
regression~(\citet{MiWy}) with rather restrictive linear hypothesis on
the interactions. Here, we explore a different route for the encoding
of spatial and potentially temporal dependencies, which we believe can
simplify both the model calibration and the data reconstruction tasks
for large scale systems.

The classical way to obtain data on a road traffic network is to
install fixed sensors, such as magnetic loops. However, this is
adapted to highways and arterial roads, but not to a whole urban
network which typically scales up to $10^5$ segments. As part of the
Field Operational Test PUMAS~\cite{PUMAS} in Rouen (Normandy), we
explored the possibility to acquire data with equipped vehicles that
send geolocalized information, and to process it directly with a fast
prediction scheme (\citet{FuLaFo,ITSC10}). While offline processing of
historical data can be allowed to be time consuming, travel times
predictions must instead be available in ``real-time'', which means in
practice a few minutes. This ``real-time'' constraint implies some
design choices: firstly, the predictions need to be computed online,
even on large networks, which can be achieved using the
message-passing algorithm Belief Propagation of~\citet{Pearl};
secondly, our model shall be suitable for the use of this inference
algorithm.

Stated in a more generic form, the problem at hand is to predict, from sparse data originating from
non stationary locations, the value of the variables on the rest of the network. The set of nodes to predict 
is potentially varying from time to time because sensors are moving, like probe vehicles in the traffic context.
Since only very sparse joint observations are available, purely data driven methods
such as $k$ nearest neighbors cannot be used and one has to resort to building some
model. We study in this article the possibility of a probabilistic graphical model that avoids
modeling the underlying complexity of the physical phenomena. Building a model of
dependency between real-valued variables can be very costly both in terms of
statistics, calibration and prediction if one tries to account for the empirical
joint probability distribution for each pair of variables. 
A possible way to proceed, compatible with the use of BP, is to build a multivariate Gaussian Copula,
since then Gaussian belief propagation can be run efficiently on such models.
However, in the traffic example, the variables are endowed with a binary perception
(congested/non-congested), which make it likely that the joint distribution will be multi modal 
in general and therefore not very well suited for a Gaussian model, which admits only one reference state
associated to one single belief propagation fixed point.
What we propose instead is to abstract the binary perception as a latent state
descriptor and to exploit it by encoding the dependencies between the real state
variables at the level of the latent variables. This way
we end up with a minimal parametric method, where both the prediction and the calibration are easy
to perform and which is well adapted to multimodal distributions; each mode can be 
under certain conditions associated to a belief propagation fixed point~\cite{FuLaAu}.
Another route could be to build a general continuous copula model, compatible with  
the use of the expectation propagation (EP) algorithm of \citet{Minka}. The EP
algorithm is indeed also very simple and efficient to use, but the model selection stage might be too 
complex for large scale applications. 
It requires to choose manually the exact exponential family for
modeling the marginal and pairwise distributions, since no automatic procedures has emerged up to now.
Also, compared to traditional methods like particle filtering, we expect a better scalability.

We formalize the model as follows: the state of the system is
represented by a vector $\>X=(X_i)_{i\in\V}$ of $N$ real valued
variables, attached to nodes $i \in \V$ and taking their respective
values in the sets $\X_i\subset {\mathbb R}$. We assume that we never
observe the full vector $\>X$, but that only pairwise observations are available. For
a given set $\E \subset \V^2$ of pair of variables, each pair $(X_i,X_j)$ such as
$(i,j) \in \E$ is observed $N_{ij}$ times, all observations being independent. These
pair samples are stored in the vector $\x$, which contains all the vectors
$\x^k$:
\begin{equation}\label{eq:xobs}
(X_i,X_j) = (x_i^k,x_j^k) \text{ for } k \in \{1,\ldots, N_{ij}\}.
\end{equation}

\bigskip

The model goes as follows: to each variable $X_i$ is attached a binary latent
variable $\sigma_i$ and the variables $X_i$ are assumed to be independent,
conditionally to the latent state $\bm\sigma$. This is a strong
assumption, generally false, but we shall see in this paper that it can provide an
efficient model for the prediction task. We wish to stress that it will be necessary
to \emph{construct} these latent variables and multiple choices could be meaningful.
Somehow this is a choice of feature functions from $\X_i$ to $\{0,1\}$.
The problem at stake is not to infer the states of a hidden Markov
model from noisy observations: the only variables of interest are the $X_i$'s. To be
able to infer the behavior of these variables, given a partial observation
of the system, we use a pairwise Markov Random Field (MRF) for the
binary variables $\sigma_i$, i.e.\ an Ising model in statistical physics parlance
(\citet{Baxter}). The joint measure for the variables $\textbf{X}$ and
$\bm{\sigma}$
factorizes as (Figure~\ref{fig:genmodel}):
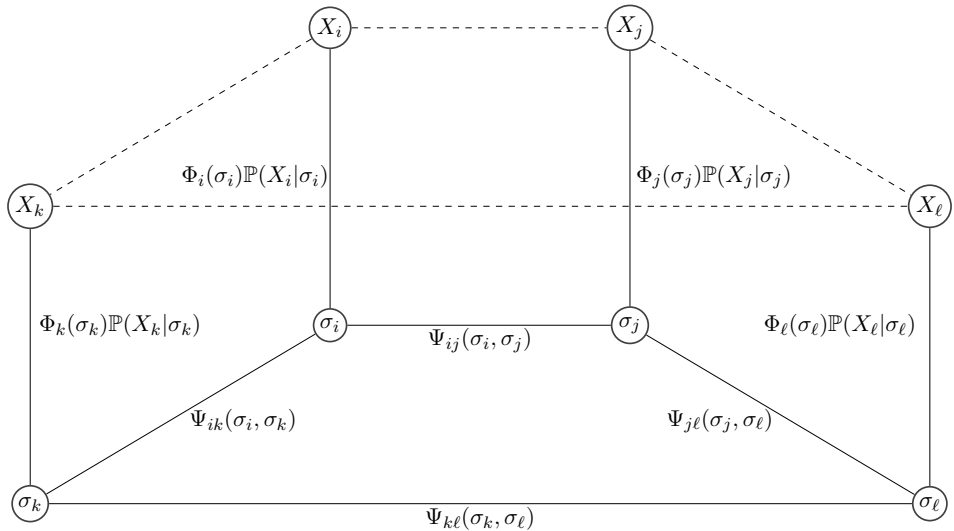
\begin{figure}
\resizebox{0.99\textwidth}{!}{\begin{tikzpicture}[scale=1]
 
 \tikzstyle{varp}=[circle,thick,draw=black!75,inner sep=1.5pt,minimum
size=1pt,font=\normalsize]
 \tikzstyle{no}=[circle,thick,draw=black!75,inner sep=1.5pt,minimum
size=1pt,font=\normalsize]
\begin{scope}
 
  \node [varp] (si) at (0,0) {$\sigma_i$};
  \node [varp] (sj) at (5,0){$\sigma_j$};
  \node [varp] (sk) at (-5,-3) {$\sigma_k$};
  \node [varp] (sl) at (10,-3) {$\sigma_\ell$};
  
  \draw[] (si) -- (sj);
  \node[color=black] at (2.5,-0.25) {$\Psi_{ij}(\sigma_i,\sigma_j)$};
  \draw (si) -- (sk);
  \node[color=black] at (-1.45,-1.6) {$\Psi_{ik}(\sigma_i,\sigma_k)$};
  \draw (sj) -- (sl);
  \node[color=black] at (6.5,-1.6) {$\Psi_{j\ell}(\sigma_j,\sigma_\ell)$};
  \draw (sk) -- (sl);
  \node[color=black] at (2.5,-3.25) {$\Psi_{k\ell}(\sigma_k,\sigma_\ell)$};

  \node [varp] (Xi) at (0,5) {$X_i$};
  \node [varp] (Xj) at (5,5){$X_j$}; 
  \node [varp] (Xk) at (-5,2) {$X_k$};
  \node [varp] (Xl) at (10,2) {$X_\ell$};

  \draw[dashed] (Xi) -- (Xj);
  \draw[dashed] (Xi) -- (Xk);
  \draw[dashed] (Xj) -- (Xl);
  \draw[dashed] (Xk) -- (Xl);

  \draw (si) -- (Xi);
  \node[color=black] at (-1.25,2.5) {$\Phi_i(\sigma_i) \P(X_i|\sigma_i)$};
  \draw (sj) -- (Xj);
  \node[color=black] at (6.4,2.5) {$\Phi_j(\sigma_j) \P(X_j|\sigma_j)$};
  \draw (sk) -- (Xk);
  \node[color=black] at (8.5,0) {$\Phi_\ell(\sigma_\ell) \P(X_\ell|\sigma_\ell)$};
  \draw (sl) -- (Xl);
  \node[color=black] at (-3.5,0) {$\Phi_k(\sigma_k) \P(X_k|\sigma_k)$};

\end{scope} 
  
\end{tikzpicture}}
\caption{Markov random field $(\mathbf{X},\bm{\sigma})$ for 
$\V =\{i,j,k,\ell\}$. The true model of the vector $\mathbf{X}$ (dashed
lines) is approximated through the latent binary variables $\bm{\sigma}$ (plain
lines).\label{fig:genmodel}}
\end{figure}
\begin{equation*}
 \P(\textbf{X}\leq \x,\bm{\sigma}=\mathbf{s}) =\P(\bm{\sigma}=\mathbf{s})
 \prod_{i \in \V}\P(X_i\leq x_i|\sigma_i=s_i),
\end{equation*}
\begin{equation*}
 \P(\bm{\sigma}=\mathbf{s}) = \frac{1}{Z}\prod_{(i,j) \in \E} \psi_{ij}(s_i,s_j)
 \prod_{i
\in \V} \phi_i(s_i),
\end{equation*}
with $Z$ a constant ensuring that $\P$ sums up to $1$. Of course it will not
be possible to model in a precise way the joint distribution of any random vector
$\>X$ through our latent Ising model. The task assign to the model is actually less
ambitious: we wish to make predictions about this random vector $\>X$. The problem
we are trying to solve is simply a regression on the variables $X_i$,
which is very different in nature from modeling the distribution $\hP$.
Note that with observations \eqref{eq:xobs} Jaynes' maximum entropy criterion leads us 
to a pairwise interaction model which is compatible with our choice.

Based on these assumptions, we try to answer three main questions:
\begin{itemize}

  \item[(i)] How to define the latent variable $\sigma_i$ and how to relate it to its real
  valued variable $X_i$?

  \item[(ii)] How to construct the dependencies between latent
    variables $\sigma_i$ in an efficient way in terms of prediction
    performance?

  \item[(iii)] How can partial observations be inserted into the model
    to perform the predictions of the unobserved variables?
\end{itemize}
These three questions are of course highly interdependent. Once the
model has been built, exact procedures to infer the behavior of the
$X_i$'s generally face an exponential complexity, and one has to
resort to an approximate procedure. We rely here on Pearl's belief
propagation (BP) algorithm~\citep{Pearl} -- widely used in the Artificial 
Intelligence and Machine Learning communities \cite{Kschi,YeFrWe3} -- 
as a basic decoding tool, which in turn will influence the MRF
structure. While this algorithm is well defined for real-valued
variables in the case of a Gaussian vector $\textbf{X}$
(see~\citet{Bickson}), the more general case requires other
procedures, like the nonparametric BP algorithm proposed
by~\citet{Sudderth10}, which involves much more computation than the
classical BP algorithm does. We propose here a new BP-based method to
tackle this same problem while keeping computations lightweight. The
BP algorithm will be precisely defined in Section~\ref{sec:bpalgo}.

The paper is organized as follows: Section~\ref{sec:real2binary} is
devoted to answering question (i), by finding a relevant mapping of an
observation $X=x$ to the parameter of a Bernoulli variable $\sigma$. 
As we shall, see this is equivalent to the definition
of a feature function.
Section~\ref{sec:buildmodel} focuses on question (ii) concerning the
optimal encoding in the latent space of the dependency between the
$X_i$'s. In Section~\ref{sec:bpalgo}, we construct a variant of BP
named ``mirror BP'' that imposes belief values of $\sigma_i$ when
$X_i$ is observed; this addresses question (iii).
Some experimental results of these methods are presented in
Section~\ref{sec:simul}.

\section{Latent variables definition}\label{sec:real2binary}

Let $X$ be a real-valued random variable with cumulative distribution function (cdf) 
 $F(x)\egaldef\P(X\leq x)$. We focus in this section on a way to relate an
observation $X=x$ to a latent binary variable $\sigma$. In the following we will
call ``$\sigma$-parameter'' the value $\P(\sigma=1)$.

\subsection{A stochastically ordered mixture}\label{ssec:randthreshold}
A simple way to relate an observation $X=x$ to the latent variable $\sigma$ is
through a mapping $\Lambda$ such that $\Lambda(x)$ is the $\sigma$-parameter.
The mapping $\Lambda$ will be referred to as the encoding function and can depend on
the cdf $F$. $\sigma$ being a latent variable, it will not be directly observed, but
conditionally to an observation $X=x$, we define its distribution as:
\begin{equation}
\label{eq:deflambda}
  \P(\sigma=1|X=x) \egaldef \Lambda(x).
\end{equation}

For simplicity, we assume that $\Lambda$ is continuous on right, limited on left
(\emph{corlol}) and increasing. Note that the condition ``$\Lambda$
is increasing'' is equivalent to have a monotonic mapping $\Lambda$ since choosing
the mapping $1-\Lambda$ simply inverts the states $0$ and $1$ of the variable
$\sigma$. Moreover $\Lambda$ shall increases from $0$ to $1$, without requiring that
$\Lambda(\X) = [0,1]$, since $\Lambda$ can be discontinuous. This constraint is
expressed as the following: 
\begin{equation}
 \label{eq:maxspread}
 \int_\X d\Lambda(X) = 1\quad  \text{and} \quad \inf_{x\in\X} \Lambda(x) = 0.
\end{equation}

Let us emphasize again that $\sigma$ is not just an unobserved latent
random variable which estimation is required. It is a feature that we define
in order to tackle the inference on $\>X$. This encoding is part of
the following global scheme
\begin{equation}
\label{eq:scheme}
 \begin{matrix}
  X_i = x_i \in \X_i &\overset{\Lambda_i}{\longrightarrow} &
\P(\sigma_i=1|X_i=x_i)\in \Lambda_i(\X_i) \\
  & & \Bigm\downarrow \text{mBP} \\
X_j=x_j \in \X_j & \overset{\Gamma_j}{\longleftarrow} &
b(\sigma_j=1)
\in [0,1] 
 \end{matrix}
\end{equation}
which is as follows:
\begin{itemize}
 \item observations of variables $X_i$ are encoded through the distribution of a
       latent binary random variable $\sigma_i$ using the encoding function $\Lambda_i$,
 \item a marginalisation procedure is then performed on these
       latent variables $\bm{\sigma}$, in a way that will be described
       later,
 \item and finally the distributions of variables $\sigma_j$ allows in turn to make predictions
       about the other real variables $X_j$.
\end{itemize}

This scheme requires that we associate to $\Lambda$ an ``inverse'' mapping $\Gamma:
[0,1] \mapsto \X$. Since $\Lambda$ can be non invertible, the decoding function
$\Gamma$ cannot always be the inverse mapping $\Lambda^{-1}$.
We will return to the choice of the function $\Gamma$ in Section~\ref{ssec:decod}.

To understand the interaction between $\sigma$ and $X$, let us define the conditional cdf's:
\begin{align*}
 F^0(x) &\egaldef \P(X\leq x|\sigma=0), \\
 F^1(x) &\egaldef \P(X\leq x|\sigma=1).
\end{align*}
Bayes' theorem allows us to write
\begin{equation*}
  \P(\sigma = 1|X=x) = \P(\sigma=1)\frac{dF^1}{dF}(x),
\end{equation*}
and thus
\begin{equation}\label{eq:dF1}
 dF^1(x) = \frac{\Lambda(x) }{\P(\sigma=1)}dF(x).
\end{equation}
Summing over the values of $\sigma$ imposes 
\begin{equation}
\label{eq:normcons}
 F(x) = \P(\sigma=1)F^1(x) + \P(\sigma=0)F^0(x),
\end{equation}
and the other conditional cdf follows
\begin{equation}\label{eq:dF0}
 dF^0(x) = \frac{1-\Lambda(x) }{\P(\sigma=0)}dF(x).
\end{equation}
The choice of this class of \emph{corlol} increasing functions has a simple
stochastic interpretation given in the following proposition. 
\begin{proposition}
 The choice of an increasing encoding function $\Lambda$ yields a  
separation of the random variable $X$ into a mixture of two stochastically ordered
variables $X^0$ and $X^1$ with distributions $dF^0$ and $dF^1$. Indeed, we then have
\begin{equation*}
 X \sim \ind{\sigma=0}X^0 + \ind{\sigma=1}X^1,
\end{equation*}
where $\sim$ is the equality in term of probability distribution. The
stochastic ordering is the following
\begin{equation*}
 X^0 \preceq X \preceq X^1.
\end{equation*}
\end{proposition}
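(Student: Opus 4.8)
The plan is to get the mixture decomposition essentially for free from the total-probability identity \eqref{eq:normcons}, and then to reduce the stochastic ordering $X^0\preceq X\preceq X^1$ to a single scalar inequality that isolates the role played by monotonicity of $\Lambda$.

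First I would dispose of the degenerate cases: if $\P(\sigma=1)\in\{0,1\}$ then $X$ coincides in law with $X^0$ or with $X^1$ and there is nothing to prove, so assume $0<\P(\sigma=1)<1$. Define $X^0,X^1$ to be real random variables with laws $dF^0,dF^1$ given by \eqref{eq:dF0} and \eqref{eq:dF1}; these are genuine probability measures because $0\le\Lambda\le1$ makes them nonnegative, and integrating \eqref{eq:dF1} (resp. \eqref{eq:dF0}) over $\X$ yields total mass $1$ by the very definitions $\P(\sigma=1)=\int_\X\Lambda\,dF$ and $\P(\sigma=0)=\int_\X(1-\Lambda)\,dF$. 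With these definitions the statement $X\sim\ind{\sigma=0}X^0+\ind{\sigma=1}X^1$ is literally \eqref{eq:normcons} read as an equality of cdf's: the right-hand side has cdf $\P(\sigma=0)F^0(x)+\P(\sigma=1)F^1(x)=F(x)$.

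For the ordering, recall $U\preceq V$ means $F_U\ge F_V$ pointwise. Subtracting first $F^1$ and then $F$ from \eqref{eq:normcons} gives
\begin{align*}
 F(x)-F^1(x) &= \P(\sigma=0)\bigl(F^0(x)-F^1(x)\bigr),\\
 F^0(x)-F(x) &= \P(\sigma=1)\bigl(F^0(x)-F^1(x)\bigr),
\end{align*}
so the whole chain $F^1(x)\le F(x)\le F^0(x)$ collapses to showing $F^0(x)\ge F^1(x)$ for every $x$. Substituting the explicit forms, clearing the positive factors $\P(\sigma=0)\P(\sigma=1)$ and using $\P(\sigma=0)=1-\P(\sigma=1)$, one checks that $F^0(x)\ge F^1(x)$ is equivalent to the single inequality
\begin{equation*}
 \int_{-\infty}^{x}\Lambda(t)\,dF(t)\ \le\ F(x)\int_{\X}\Lambda(t)\,dF(t).
\end{equation*}

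This inequality is the only place where $\Lambda$ being increasing is used, and it is the crux of the argument. It is precisely $\cov\bigl(\ind{X\le x},\Lambda(X)\bigr)\le0$, i.e. that the non-increasing function $t\mapsto\ind{t\le x}$ and the non-decreasing function $\Lambda$ of the same variable are negatively correlated. I would prove it by the standard coupling: with $X'$ an independent copy of $X$, the product $\bigl(\ind{X\le x}-\ind{X'\le x}\bigr)\bigl(\Lambda(X)-\Lambda(X')\bigr)$ is $\le0$ pointwise (check the four sign patterns of $X,X'$ relative to $x$), and its expectation equals $2\cov\bigl(\ind{X\le x},\Lambda(X)\bigr)$, whence the claim; equivalently $\E[\Lambda(X)\mid X\le x]\le\Lambda(x)\le\E[\Lambda(X)\mid X>x]$ forces $\E[\Lambda(X)\mid X\le x]\le\E[\Lambda(X)]$. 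The only thing demanding care is technical rather than conceptual: since $\Lambda$ is merely \emph{corlol} and may jump, these manipulations should be carried out at the level of the Stieltjes measures $dF,dF^0,dF^1$ (keeping track of a possible atom of $F$ at $x$), but they use nothing beyond $0\le\Lambda\le1$ and monotonicity, so the inequalities are unaffected. That completes the proof.
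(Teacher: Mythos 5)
Your proof is correct, and it takes a genuinely different route from the paper's. The paper proves $F^1\leq F$ directly by a case split on whether $\Lambda(x)\leq\P(\sigma=1)$ or $\Lambda(x)\geq\P(\sigma=1)$: in the first case it bounds $\Lambda(y)/\P(\sigma=1)\leq 1$ on $\left(-\infty,x\right]$, in the second it bounds the tail integral on $\left[x,+\infty\right)$ from below, and it then gets $F\leq F^0$ from the mixture identity \eqref{eq:normcons}. You instead use \eqref{eq:normcons} to collapse both inequalities at once to the single statement $F^1(x)\leq F^0(x)$, rewrite that as $\cov\bigl(\ind{X\le x},\Lambda(X)\bigr)\leq 0$, and prove this Chebyshev-type correlation inequality by the independent-copy symmetrization $\E\bigl[(\ind{X\le x}-\ind{X'\le x})(\Lambda(X)-\Lambda(X'))\bigr]\leq 0$; all the identities you invoke check out, including the factor $2$ and the equivalence $pF(x)\geq\int_{-\infty}^x\Lambda\,dF$ after clearing the positive denominators. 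What your version buys is a cleaner isolation of where monotonicity enters (a single negative-correlation lemma valid for any measurable non-decreasing $\Lambda$, atoms and jumps included, so the corlol caveat is genuinely harmless) and it extends verbatim to the multi-threshold or larger feature-space generalizations mentioned in the paper's remark; it also handles the degenerate cases $\P(\sigma=1)\in\{0,1\}$ explicitly, which the paper silently assumes away. What the paper's version buys is brevity and self-containedness: the two-line case analysis needs no covariance identity or coupling, only the pointwise comparison of $\Lambda$ with the constant $\P(\sigma=1)$ on a half-line.
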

\begin{proof}
It is sufficient (and necessary) to prove that 
\begin{equation*}
\forall x\in\X,\quad F^1(x) \leq F(x) \leq F^0(x).
\end{equation*}
Consider first the left inequality ($F^1\leq F$); If $x \in \X$ is such that
$\Lambda(x) \leq \P(\sigma=1)$, then we have:
\begin{align*}
 F^1(x) &= \int_{-\infty}^x dF^1(y) = \int_{-\infty}^x \frac{\Lambda(y)
}{\P(\sigma=1)}dF(y),\\
 &\leq \int_{-\infty}^x dF(y) = F(x),
\end{align*}
because, $\Lambda$ being increasing, $\Lambda(y) \leq \P(\sigma=1)$ for all $y \in
]-\infty,x]$. Conversely, when $\Lambda(x) \geq \P(\sigma=1)$,
\begin{align*}
 F^1(x) &= 1 - \int_{x}^{+\infty} dF^1(y) = 1 - \int_{x}^{+\infty} \frac{\Lambda(y)}
 {\P(\sigma=1)}dF(x),\\
 &\leq 1- \int_{x}^{+\infty} dF(y) = F(x),
\end{align*}
using again the fact that $\Lambda$ is increasing. The other inequality ($F \leq F^0$)
is obtained using~\eqref{eq:normcons}.
\end{proof}
\begin{Rq}
Since the encoding function $\Lambda$ is increasing from $0$ to $1$, it can be
considered as the cdf of some random variable $Y$,
\begin{equation*}
 \P(Y \leq x) \egaldef \Lambda(x),
\end{equation*}
which allows to reinterpret the previous quantities in terms of $Y$:
\begin{align*}
 \P(\sigma=1) &= \int_\X \P(\sigma=1|X=x)dF(x)\\
 &= \int_\X \Lambda(x) dF(x) = \int_\X \P(Y\leq x) dF(x)\\
 &= \vphantom{\int_\X} \P(Y \leq X),
\end{align*}
supposing that $Y$ and $X$ are independent. The variable $\sigma$ can therefore be
defined as 
\begin{equation*}
 \sigma \egaldef \ind{Y \leq X},
\end{equation*}
which means that the variable $Y$ acts as a random threshold separating $X$-values
that correspond to latent states $0$ and $1$. The stochastic ordering between
$(X|\sigma=0)$ and $(X|\sigma=1)$ then appears quite naturally. Note that this interpretation
leads to a natural extension to a larger discrete feature space for $\sigma$ simply 
using multiple thresholds.  
When $\Lambda=F$, the conditional cdf's of $X$ are:
\begin{align*}
 F^1(x) &= (F(x))^2 = \mathbb{P}(\max(X_1,X_2)\leq x), \\
 F^0(x) &= F(x)(2-F(x))  = \mathbb{P}(\min(X_1,X_2)\leq x),
\end{align*}
with $X_1$ and $X_2$ two independent copies of $X$.
\end{Rq}

\subsection{Choosing a good encoding function
\texorpdfstring{$\Lambda$}{Lambda}}\label{ssec:choix_encod}

Now that the nature of the mapping between $X$ and $\sigma$ has been described,
it remains to find an ``optimal'' encoding function. It turns out to be difficult to
find a single good criterion for this task. In this section, we therefore propose two
different approaches, based respectively on the mutual information and on the
entropy.

\paragraph{Mutual information.}
The idea here is to choose $\Lambda$ (or equivalently $\sigma$), such that the
mutual information $I(X,\sigma)$
between variables $X$ and $\sigma$ is maximized. In other words, a given information about 
one variable should lead to as much knowledge as possible on the other one.
\begin{proposition}
Let $q_X^{\numprint{0.5}}$ be the median of $X$. The encoding function 
$\Lambda_\mathrm{MI}$ which maximizes the mutual information $I(X,\sigma)$ between variables $X$
and $\sigma$ is the step function
\begin{equation*}
 \Lambda_\mathrm{MI}(x) \egaldef \1_{\{x\,\geq\, q_X^{\numprint{0.5}}\}}.
\end{equation*}
\end{proposition}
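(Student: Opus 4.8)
The plan is to rewrite the mutual information in a form that depends on $\Lambda$ only through two simple functionals, and then bound it directly. First I would use $I(X,\sigma)=H(\sigma)-H(\sigma\mid X)$. By construction $\sigma$ is Bernoulli with $\P(\sigma=1)=\int_\X\Lambda\,dF\egaldef p$, and $\sigma$ conditioned on $\{X=x\}$ is Bernoulli with parameter $\Lambda(x)$. Writing $h(u)=-u\ln u-(1-u)\ln(1-u)$ for the binary entropy, this gives
\begin{equation*}
 I(X,\sigma)=h(p)-\int_\X h(\Lambda(x))\,dF(x),\qquad p=\int_\X\Lambda\,dF .
\end{equation*}

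Next I would derive the bound $I(X,\sigma)\le\ln 2$, which is immediate since $h\le\ln 2$ forces $h(p)\le\ln 2$ and $h\ge 0$ forces $\int_\X h(\Lambda)\,dF\ge 0$. The substance is in the equality case: $h(p)=\ln 2$ holds iff $p=\numprint{0.5}$, and $\int_\X h(\Lambda)\,dF=0$ holds (as the integral of a nonnegative function) iff $h(\Lambda(x))=0$ for $F$-almost every $x$, i.e.\ $\Lambda$ takes only the values $0$ and $1$ outside an $F$-null set. Hence any maximiser is, up to an $F$-null set, the indicator of a set of $X$-probability $\numprint{0.5}$, and conversely such an indicator attains the value $\ln 2$.

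It then remains to impose the standing structural constraints, namely that $\Lambda$ is corlol, increasing, with $\inf_\X\Lambda=0$ and $\int_\X d\Lambda=1$. An increasing $\{0,1\}$-valued corlol function is necessarily of the form $\Lambda=\1_{\{x\ge t\}}$ for some threshold $t$, the requirements $\inf\Lambda=0$ and $\int d\Lambda=1$ ruling out $t=\pm\infty$. The condition $p=\P(X\ge t)=1-F(t^-)=\numprint{0.5}$ says precisely that $t$ is a median of $X$; replacing $t$ by $q_X^{\numprint{0.5}}$ modifies $\Lambda$ only on an $F$-null set and therefore leaves $I(X,\sigma)$ unchanged. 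This identifies $\Lambda_{\mathrm{MI}}=\1_{\{x\ge q_X^{\numprint{0.5}}\}}$ as the ($F$-a.s.\ unique) maximiser and concludes the proof.

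The only delicate point — and the one I would flag as the main obstacle — is the case where $F$ has an atom straddling the median, so that no threshold $t$ gives $\P(X\ge t)$ exactly equal to $\numprint{0.5}$ and the value $\ln 2$ is not attained within the admissible class. There one must argue separately that the optimum over corlol increasing $\Lambda$ is still reached at $t=q_X^{\numprint{0.5}}$: once $\Lambda$ is forced to be $\{0,1\}$-valued the objective reduces to $h(p)$, which is decreasing in $|p-\numprint{0.5}|$, so it suffices to compare the two candidate thresholds on either side of the atom. If one assumes $F$ continuous at its median this complication disappears and the short argument above applies verbatim.
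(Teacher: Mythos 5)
Your proof is correct and takes essentially the same route as the paper's: both rest on the identity $I(X,\sigma)=H(\P(\sigma=1))-\int_{\X}H(\Lambda(x))\,dF(x)$ and then conclude that the optimum is a corlol increasing indicator with $\P(\sigma=1)=\nicefrac{1}{2}$, i.e.\ the step at the median. Your explicit equality-case analysis (and the flagged caveat about an atom at the median, which the paper silently ignores by implicitly assuming the value $\log 2$ is attainable) is just a slightly more careful packaging of the same argument.
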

Before turning to the proof of this proposition, let us remark that this definition
of the binary variable $\sigma$ is a natural one, $\sigma$ being deterministic as a
function of $X$. However, as we shall see in Section~\ref{sec:simul}, it is usually
suboptimal for the reconstruction task.

\begin{proof}
The function to maximize is
\begin{align*}
 I(X,\sigma) &= \sum_s \int_{\X} \P(\sigma=s)\log\left(
\frac{dF^s(x)}{dF(x)}\right)dF^s(x)\\
&= H(\P(\sigma=1)) - \int_{\X} H(\Lambda(x))dF(x),
\end{align*}
where $H(p) \egaldef -p\log p -(1-p)\log(1-p)$ is the binary entropy function. Among
all random  variables $\sigma$ with entropy $H(\P(\sigma=1))$, the ones which
maximize $I(X,\sigma)$ are deterministic functions of $X$, or equivalently the ones
for which $\Lambda$ is an indicator function. Since we limit ourselves to the
\emph{corlol} class,
we get $\Lambda = \1_{[a,+\infty[}$ for some $a \in \X$. It remains to maximize
the entropy of the variable $\sigma$, which leads to $P(\sigma=1) =
\nicefrac{1}{2}$ and $a = q_X^{\numprint{0.5}}$.
\end{proof}

\paragraph{Max-entropy principle.} 
Another possibility is, in order to maximize the information contained in the latent
variable $\sigma$, to maximize the entropy of $U=\Lambda(X)$. This variable $U$ is
indeed the data that will be used to build the Ising model over the latent variables
(see Section~\ref{sec:buildmodel}). We assume here that the variable $X$ admits a
probability density function (pdf). We add to the few constraints detailed in the
previous section that $\Lambda$ is a bijection between $\X$ and $[0,1]$.  When
dealing with continuous random variables, the entropy only makes sense relatively
to some measure (see~\citet[pp. 374-375]{Jaynes2003}). Following
Jaynes'~\cite{Jaynes1968} arguments, since $U$ is the parameter of a Bernoulli
variable with both outcomes possible, having no other prior knowledge leads us to the
uniform measure as reference. 

\begin{proposition}
\label{prop:maxentcriterion} 
Let $X$ be a random variable which admits a pdf. The (increasing) invertible function
which maximizes the entropy of $U = \Lambda(X)$, taken relatively to the uniform
measure, is the cumulative distribution function $F$ of the variable $X$.
\end{proposition}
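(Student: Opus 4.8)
The plan is to reformulate the maximization as a problem over the distribution of the transformed variable $U = \Lambda(X)$. Since $\Lambda$ is assumed to be an increasing bijection from $\X$ to $[0,1]$, and $X$ has a pdf, the change of variables $u = \Lambda(x)$ is well defined; write $g$ for the density of $U$ on $[0,1]$. The differential entropy of $U$ relative to the uniform (Lebesgue) reference measure on $[0,1]$ is $h(U) = -\int_0^1 g(u)\log g(u)\,du$. So the task becomes: among all admissible densities $g$ on $[0,1]$ obtainable as the push-forward of $dF$ by an increasing bijection, which one maximizes $h(U)$?

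**The key observation.** The crucial point is that an increasing bijection $\Lambda$ is determined by $g$ (and conversely), because the push-forward relation forces $g(\Lambda(x))\Lambda'(x) = F'(x)$ for (almost) every $x$; integrating, $G(\Lambda(x)) = F(x)$ where $G$ is the cdf of $U$. Thus $\Lambda = G^{-1}\circ F$, and any density $g$ on $[0,1]$ with $g>0$ a.e.\ (so that $G$ is invertible) is attainable. Hence the constrained optimization decouples into the unconstrained problem of maximizing $h(U) = -\int_0^1 g\log g$ over all probability densities $g$ on $[0,1]$.

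**Solving the reduced problem.** This is the classical maximum-entropy statement on a bounded interval: by Jensen's inequality (or a one-line Lagrange-multiplier argument, or the nonnegativity of the Kullback--Leibler divergence $\mathrm{KL}(g\,\|\,1) = \int_0^1 g\log g \ge 0$), the unique maximizer is $g \equiv 1$, the uniform density, giving $h(U)=0$. Translating back: $g\equiv 1$ means $G(u)=u$, so $\Lambda = G^{-1}\circ F = F$. This identifies $F$ as the optimal encoding function, and one checks it indeed satisfies the standing assumptions — it is increasing, \emph{corlol}, and maps $\X$ onto $[0,1]$ (using that $X$ has a pdf, so $F$ is continuous) — and it is consistent with the normalization constraints \eqref{eq:maxspread}.

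**Main obstacle.** The only delicate point is making the reduction rigorous: one must argue that the map $\Lambda \mapsto \mathrm{law}(\Lambda(X))$ is a bijection between the admissible increasing bijections $\X \to [0,1]$ and the probability densities on $[0,1]$ (modulo the usual almost-everywhere caveats), so that nothing is lost by passing to the $g$-formulation. Once that correspondence is in place — essentially the observation that $\Lambda = G^{-1}\circ F$ — the entropy maximization is entirely standard. A secondary point worth a sentence is why the reference measure for $U$ is the uniform one: as the paper already notes, following Jaynes, $U$ is a Bernoulli parameter with no further prior structure, so the uniform prior on $[0,1]$ is the natural reference, and it is precisely this choice that makes $F$ come out as the answer.
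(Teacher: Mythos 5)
Your proposal is correct and follows essentially the same route as the paper's (very terse) proof: identify the entropy-maximizing law of $U$ on $[0,1]$ as the uniform one, and observe that the unique increasing bijection $\Lambda$ for which $\Lambda(X)$ is uniform is the cdf $F$. Your write-up merely fills in the details the paper leaves implicit, namely the correspondence $\Lambda = G^{-1}\circ F$ and the Kullback--Leibler argument for why the uniform density maximizes the entropy.
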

\begin{proof}
The variable $U$ with maximal entropy has a uniform pdf $h_\Lambda(u) = \1_{[0,1]}(u)$.
The encoding function $\Lambda$ such as $\Lambda(X)$ is a uniform variable on $[0,1]$
is the cdf of $X$, which concludes the proof. 
\end{proof}

\smallskip

\noindent Let us quickly sum up the choices we proposed for the encoding function:
\begin{itemize}
 \item $\Lambda_\text{MI}$ which is a deterministic encoding: $\sigma$
indicates the position of $X$ w.r.t.\ its median;
 \item $F$ the cdf of $X$, which corresponds to the less discriminating choice about
the encoded data distribution.
\end{itemize}
We will see that these two encoding functions have very distinct properties:
$\Lambda_\text{MI}$ is much more conservative than $F$ but is rather
adapted to model precisely the joint distributions. Let us remark that in the first case
$\Lambda$ is the feature function while in the second case the feature function is a 
random variable.

\subsection{Decoding function \texorpdfstring{$\Gamma$}{Gamma}}\label{ssec:decod}

Before turning to the definition of the decoding function $\Gamma$, let us focus first 
on the following simple question:
\begin{quote}
 \emph{What is the best predictor of a real-valued random variable $X$, knowing only
its distribution?}
\end{quote}
The answer will obviously depend on the loss function considered and this will in
turn influence the choice of the decoding function $\Gamma$, which purpose is to
predict the random variable $X$. Assuming a $L^r$ norm as loss function, the optimal
predictor $\hat \theta_r(X)$ is then defined as
\begin{equation*}
 \hat \theta_r(X) \egaldef \underset{c \in
\mathbb{R}}{\operatorname{argmin}}\  \E_X[|X -c|^r].
\end{equation*}
In the case $r=1$, the optimal predictor $\hat \theta_1(X)$ is simply the median of
 $X$; $r=2$ corresponds to $\hat \theta_2(X) = \E[X]$, the mean value of $X$.
In the following we call ``contextless prediction'' the $X$-prediction performed without
other information than the distribution of $X$.

\medskip

 When focusing on the definition of the ``inverse'' mapping $\Gamma$, two
natural definitions arise. When $\Lambda$ is a bijection, the simplest predictor
of $X$, given $b=\P(\sigma=1)$, is $\Lambda^{-1}(b)$. Actually, it is the
unique $X$-value such that $(\sigma|X=x)$ is distributed as $\P(\sigma=1|X=x) 
=b$ by definition \eqref{eq:deflambda} of $\Lambda$. We will denote this first
choice for the decoding function
\begin{equation*}
 \GammaD \egaldef \Lambda^{-1}.
\end{equation*}
$\GammaD$ corresponds, in some sense, to a predictor based on maximum
likelihood (ML). Indeed, suppose that the knowledge of $b=\P(\sigma=1)$ is
replaced with a sample of $M$ independent copies $s^k$ of a binary variable
distributed as $\P(\sigma|X=x)$. The ML estimate of $x$ is then
$\Lambda^{-1}(\nicefrac{\sum_k s^k}{M})$. So the choice $\Lambda^{-1}$ as decoding
function corresponds to the ML estimate from a sample with an empirical rate of
success equal to $b$.

In the more general case of an increasing \emph{corlol} encoding function, with
a Bayesian point of view, the knowledge of the $\sigma$-parameter
allows to update the distribution of $X$. Applying Jeffrey's update rule
(see \citet{Chan2005}) yields the updated cdf $\FP$
\begin{equation}
\label{eq:Xhat}
 \FP(x) = bF^1(x) + (1-b)F^0(x).
\end{equation}
Let $\XP$ be a random variable which distribution is $\FP$. The predictor 
 $\hat \theta(\XP)$ previously defined can be used irrespective of whether
$\Lambda$ is invertible or not. To refer to this second choice we will use the
notation
\begin{equation*}
 \GammaP \egaldef \hat \theta(\XP).
\end{equation*}
Note that, while it may be costly in general to compute the wanted statistic of
$\XP$, some choices of $\Lambda$ lead to explicit formulas.

\paragraph{Mutual information.} We consider here the case of the step function 
$\Lambda_\text{MI}$ as encoding function. This function is of course not invertible
and only the Bayesian decoding function $\GammaP$ can be used. 
Using \eqref{eq:dF1}--(\ref{eq:Xhat}), the cdf of $\XP$ is
\begin{equation}\label{eq:GpMI}
\FP(x) = 
\begin{cases}
 2(1-b) F(x), &\text{if } x \leq q_X^{\numprint{0.5}},\\[2mm]
 \FP(q_X^{\numprint{0.5}}) + 2b\left(F(x)-F(q_X^{\numprint{0.5}})\right),
&\text{if } x > q_X^{\numprint{0.5}}.
\end{cases}
\end{equation}
In order to compute $\hat\theta_1(\XP)$, we need to solve the equation
$\FP(x)=1/2$, leading to the decoding function
\begin{equation*}
\GammaP(b) =
\begin{cases}
 F^{-1}\left(\frac{1}{4(1-b)}\right),&\text{if } b \leq \frac{1}{2},\\[2mm]
 F^{-1}\left(\frac{4b-1}{4b}\right),& \text{if } b > \frac{1}{2}.
\end{cases}
\end{equation*}
When $F$ is not invertible, $F^{-1}$ should be understood as the pseudo-inverse of
$F$, commonly used to define quantiles:
\begin{equation*}
F^{-1}(b) \egaldef \inf_x\{x\mid F(x)\geq
b\}. 
\end{equation*}
If we choose the predictor $\hat \theta_2$ based on a $L^2$ loss function, using the 
linearity of the expectation we get
\begin{equation*}
 \GammaP(b) = \E\left[\XP\right] = b\,\E\left[X\mid \sigma=1\right] +
(1-b)
\E\left[X \mid\sigma=0\right].
\end{equation*}

\paragraph{Max entropy principle.} When one uses $\GammaD = F^{-1}$ as
decoding function, the contextless prediction, i.e.\ without any observation, is simply
 $F^{-1}\bigl(\P(\sigma=1)\bigr)$. Moreover, we know that $\P(\sigma=1) = \E[F(X)] =
\nicefrac{1}{2}$ -- provided that $X$ admits a pdf -- so the ground prediction is the
median of $X$. The choice $\Lambda=F$ and $\Gamma=F^{-1}$ is therefore optimal
w.r.t.\ a $L^1$ loss function for the prediction error.

The other choice for the decoding function is to use $\GammaP$ and to
compute, for example, the predictor $\hat\theta_1(\XP)$. Using 
\eqref{eq:dF1} -- \eqref{eq:Xhat}, we get the cdf of $\XP$
\begin{equation*}
 \FP(x) = \bigl( (2b-1)F(x) - 2(b-1)\bigr)F(x),
\end{equation*}
and the sought function is solution of the following quadratic equation
\begin{equation*}
 \left( (2b-1)F(x) - 2(b-1)\right)F(x) = \frac{1}{2},
\end{equation*}
with only one reachable root. Thus the Bayesian decoding function is
\begin{equation}\label{eq:GpS}
\GammaP(b) = F^{-1}\left(\frac{2(b-1) + \sqrt{(2b-1)^2+1}}{4b-2}
\right).
\end{equation}

Let us remark that the Bayesian decoding function $\GammaP$ is always
more conservative than $\GammaD$. Using the inverse $F^{-1}$ allows us to
make predictions spanning the whole set $\X$ of possible outcomes, which is
not the case with the Bayesian decoding function. Figure~\ref{fig:Decod}
illustrates this.

Assume that two random variables $X_1$ and $X_2$ are equal with
probability $1$. Even if we build a latent model such that $\P(\sigma_1 = \sigma_2)
= 1$, using $\GammaP$ as decoding function will never predict $X_1=X_2$.
The decoding function $\GammaP$ is in fact trying to approximate
the joint distribution of ($X_1,X_2$) and this approximation can only be very rough
when variables are strongly dependent (see Proposition~\ref{prop:capac}).
However, the choice ($F,F^{-1}$) is equivalent to performing a $X$-quantiles regression.
We will see in Section~\ref{sec:simul} that this last choice is better when
variables are strongly dependent.
\begin{figure}
 \centering\resizebox{0.8\textwidth}{!}{\input{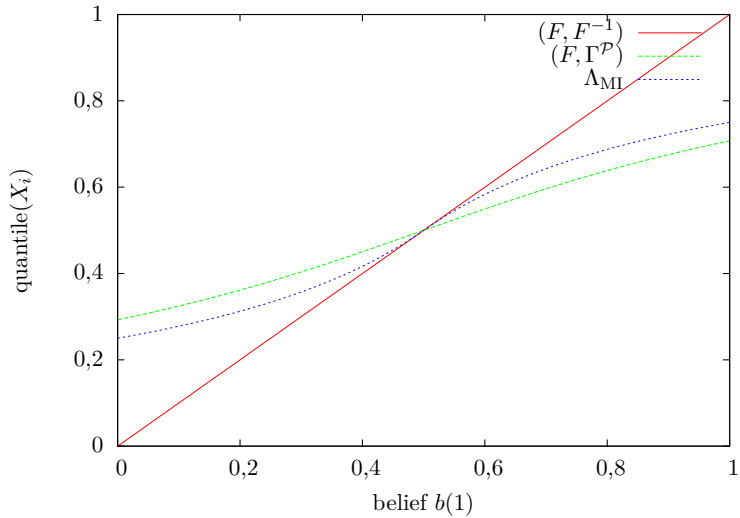}}
\caption{Prediction, expressed in quantiles, on the variable $X_i$ for a given belief
$b_i(1)$. The left and right values are ($\nicefrac{1}{4},\nicefrac{3}{4}$) for
$\Lambda_\text{MI}$ and ($1-\nicefrac{\sqrt{2}}{2}$,$\nicefrac{\sqrt{2}}{2}$) for
($F$,$\GammaP$).\label{fig:Decod}}
\end{figure}

In one wishes to choose the decoding function based on the ML estimate
$\GammaP = \Lambda^{-1}$, it is of interest to generalize the max-entropy
criterion in order to get an encoding function $\Lambda$ with an optimal contextless
prediction w.r.t.\ a specific loss function. It is in fact quite simple to solve this
problem and to obtain the sought encoding function which is based on the cdf
(\citet[chapter 5]{mythesis}). We will use here only the cdf function because we
are interested in $L^1$ error measure. Compared to a loss function based on the $L^2$
norm, it gives less weight to extreme values.

\section{Building pairwise dependencies}\label{sec:buildmodel}

It was shown in Section~\ref{sec:real2binary} how to relate the variable $X_i$ to its
latent state $\sigma_i$, by means of an encoding function $\Lambda_i$. The next question
to address is how to encode the dependencies at the latent state level and, more generally,
how to estimate the parameters of the underlying Ising model on $\bm{\sigma}$. Given
two real-valued variables $X_i$ and $X_j$, with respective cdf $F_i$ and $F_j$, and
two binary variables $\sigma_i$ and $\sigma_j$, we want to construct a pairwise model
 as described in Figure~\ref{fig:genmodel}. The probability distribution
of the vector $(X_i,X_j,\sigma_i,\sigma_j)$ for this model is
\begin{equation}
\label{eq:hmrflaw}
 \P(X_i\le x_i,X_j\le x_j,\sigma_i=s_i,\sigma_j=s_j) \\= p_{ij}(s_i,s_j)
 F^{s_i}_i(x_i)F^{s_j}_j(x_j).
\end{equation}
Since $\sigma_i$ and $\sigma_j$ are binary variables, $p_{ij}(s_i,s_j)$ can be expressed
with $3$ independent parameters,
\begin{align*}
p_{ij}(s_i,s_j) = p_{ij}^{11}s_is_j &+\left(p_{j}^1-p_{ij}^{11} \right)\bar s_i
s_j +\left(p_{i}^1-p_{ij}^{11} \right)s_i\bar s_j\\
&+\left(1-p_{i}^1-p_{j}^1+p_{ij}^{11}\right)\bar s_i\bar s_j,
\end{align*}
using the notation $\bar s \egaldef 1-s$ and with
\begin{align*}
p_i^1&\egaldef \P(\sigma_i=1) = {\mathbb E}{(\sigma_i)},\\
p_{ij}^{11} &\egaldef \P(\sigma_i=1,\sigma_j=1) =  {\mathbb E}{(\sigma_i
\sigma_j)}.
\end{align*}
The probability distribution is valid as soon as $(p_i^1,p_j^1) \in
[0,1]^2$ and 
\begin{equation*}
 p_{ij}^{11} \in \mathbb D(p_i^1,p_j^1) \egaldef \left[\max(0,p_i^1+p_j^1-1)
 ,\min(p_i^1,p_j^1)\right].
\end{equation*}
Until now, we have been able to make optimal choices in some sense, but obviously
the number of parameters is not enough to encode exactly any structure of dependency.
This is shown in the following proposition
\begin{proposition}\label{prop:capac}
 When the mutual information $I_{\hP}(X_i,X_j)$ between the real variables is
strictly greater
 than $\log(2)$, our model is not able to perfectly encode the joint distribution of
 $X_i$ and $X_j$ for any choice of encoding function.
\end{proposition}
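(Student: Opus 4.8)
The plan is to exploit the conditional-independence structure built into \eqref{eq:hmrflaw} together with the data processing inequality. First I would observe that, whatever the encoding functions $\Lambda_i,\Lambda_j$, and hence whatever the conditional cdf's $F_i^{s_i},F_j^{s_j}$ and the joint law $p_{ij}$, the model \eqref{eq:hmrflaw} makes $X_i$ and $X_j$ interact only through the binary pair $(\sigma_i,\sigma_j)$: the bipartite factorization forces $X_i$ to be conditionally independent of $(\sigma_j,X_j)$ given $\sigma_i$, and symmetrically $X_j$ conditionally independent of $(\sigma_i,X_i)$ given $\sigma_j$. Consequently the four variables form a Markov chain $X_i - \sigma_i - \sigma_j - X_j$ in that order.

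Then I would apply the data processing inequality twice along this chain. From $X_i - \sigma_i - X_j$ one gets $I(X_i,X_j)\le I(\sigma_i,X_j)$, and from $\sigma_i - \sigma_j - X_j$ one gets $I(\sigma_i,X_j)\le I(\sigma_i,\sigma_j)$, so that inside the model $I(X_i,X_j)\le I(\sigma_i,\sigma_j)$. Finally, since $\sigma_i$ takes only two values, $I(\sigma_i,\sigma_j)\le H(\sigma_i)\le\log(2)$, with equality only if $\sigma_i$ is uniform and $\sigma_j$ is a deterministic function of $\sigma_i$. Chaining these inequalities, every distribution of $(X_i,X_j)$ representable by \eqref{eq:hmrflaw} satisfies $I(X_i,X_j)\le\log(2)$.

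Hence if the target distribution $\hP$ is such that $I_{\hP}(X_i,X_j)>\log(2)$, it cannot coincide with any member of the family \eqref{eq:hmrflaw}, irrespective of the choice of encoding functions, which is exactly the claim. The only point needing a little care is the first step: one must verify on the joint law that the fact that $p_{ij}$ couples solely $\sigma_i$ and $\sigma_j$ (while $X_i$ is attached to $\sigma_i$ alone and $X_j$ to $\sigma_j$ alone) does yield the stated Markov chain, and one must invoke the data processing inequality in its general form valid for arbitrary (here real-valued) variables rather than only for discrete ones. No substantive obstacle is expected; the content of the proposition is simply that a single shared latent bit acts as an information bottleneck capping the attainable mutual information at $\log(2)$.
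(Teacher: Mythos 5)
Your proof is correct, and it follows a genuinely different route from the paper's. You first verify from \eqref{eq:hmrflaw} that the model's joint law factorizes as $p(x_i,s_i)\,p(s_j\mid s_i)\,p(x_j\mid s_j)$, i.e.\ that $X_i-\sigma_i-\sigma_j-X_j$ is a Markov chain (this does hold: conditionally on $\sigma_i=s_i$ the density of $(X_i,\sigma_j,X_j)$ is $f_i^{s_i}(x_i)\cdot\frac{p_{ij}(s_i,s_j)}{p_i(s_i)}f_j^{s_j}(x_j)$, so $X_i$ is independent of $(\sigma_j,X_j)$ given $\sigma_i$), and then two applications of the data processing inequality plus $I(\sigma_i,\sigma_j)\le H(\sigma_i)\le\log 2$ show that \emph{every} distribution of $(X_i,X_j)$ realizable by the model has mutual information at most $\log 2$, whatever the encoding functions; since mutual information is a functional of the joint law, no such distribution can equal $\hP$ when $I_{\hP}(X_i,X_j)>\log 2$. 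The paper instead works directly with the divergence: it writes $D_\text{KL}(\hP\|\P)=I_{\hP}(X_i,X_j)-\mathbb{I}(X_i,X_j)$ (using that the model matches the marginals of $\hP$) and bounds $\mathbb{I}(X_i,X_j)\le\log 2$ by Jensen's inequality applied to the mixture $\sum_{\sigma_i,\sigma_j}\frac{\P(\sigma_i,\sigma_j)}{\P(\sigma_i)\P(\sigma_j)}\Lambda_i^{\sigma_i}\Lambda_j^{\sigma_j}$. Your argument is more elementary and more standard (the single latent bit is an information bottleneck), and it does not even need the assumption that the model reproduces the marginals of $\hP$; the paper's computation buys a little more, namely the quantitative estimate $D_\text{KL}(\hP\|\P)\ge I_{\hP}(X_i,X_j)-\log 2$, which measures how far the best model approximation must remain from $\hP$ rather than only asserting impossibility of exact encoding. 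The only care points you flag -- checking the Markov chain on the joint law and using the DPI in its general, non-discrete form -- are exactly the right ones, and both go through without difficulty.
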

This result is compatible with intuition: whatever the definition of the binary
variables, it will not be possible to share more than one bit of information between
two of them. However, we shall see in Section~\ref{sec:simul} that it is still
possible to obtain quasi-optimal performances for the prediction task even when the
mutual information is strictly greater than $\log(2)$.

\begin{proof}
 We will prove that the Kullback-Leibler divergence between the empirical joint
distribution $\hP$ of $(X_i,X_j)$ and the joint distribution $\P$ within our model
is strictly positive as soon as $I_{\hP}(X_i,X_j)>\log(2)$.
\begin{align*}
 D_\text{KL}(\hP||\P) &= \int \hP(x_i,x_j) \log \frac{\hP(x_i,x_j)}{\P(x_i,x_j)}
dx_idx_j\\
&= I_{\hP}(X_i,X_j) + \int \hP(x_i,x_j) \log
\frac{\hP(x_i)\hP(x_j)}{\DD\P(x_i,x_j)}dx_idx_j,\\
&\egaldef I_{\hP}(X_i,X_j) - \mathbb{I}(X_i,X_j).
\end{align*}
Using the fact that $\P(x_i) = \hP(x_i)$ and expanding w.r.t.\ $\sigma_i$ and
$\sigma_j$, one gets
\begin{align*}
 \mathbb{I}(X_i,X_j) &= \int \hP(x_i,x_j) \log \left(
\sum_{\sigma_i,\sigma_j} \frac{\P(\sigma_i,\sigma_j)}{\P(\sigma_i)\P(\sigma_j)}
\Lambda_i^{\sigma_i}(x_i)\Lambda_j^{\sigma_j}(x_j)\right)dx_idx_j,\\
&\leq \log \left(\int \hP(x_i,x_j) \sum_{\sigma_i,\sigma_j}
\frac{\P(\sigma_i,\sigma_j)}{\P(\sigma_i)\P(\sigma_j)}
\Lambda_i^{\sigma_i}(x_i)\Lambda_j^{\sigma_j}(x_j)dx_idx_j\right),
\end{align*}
with $\Lambda^1\egaldef\Lambda$ and $\Lambda^0 \egaldef 1-\Lambda$.
Defining $\hP_{\sigma_i\sigma_j} \egaldef \E_\hP[\Lambda_i^{\sigma_i}(X_i)
\Lambda_j^{\sigma_j}(X_j)]$, we get the final expression
\begin{equation*}
 \mathbb{I}(X_i,X_j) \leq \log \left( \sum_{\sigma_i,\sigma_j}
\frac{\P(\sigma_i,\sigma_j)}{\P(\sigma_i)\P(\sigma_j)}
\hP_{\sigma_i\sigma_j}\right) \leq \log(2),
\end{equation*}
because we have $\P(\sigma_i,\sigma_j) \leq \P(\sigma_j)$ and $\sum_{\sigma_j}
\hP_{\sigma_i\sigma_j} = \P(\sigma_i)$.
\end{proof}

\medskip

We will focus first on the estimation of the pairwise distribution $p_{ij}$ of
$(\sigma_i,\sigma_j)$, without discussing how to estimate the joint distribution of
$\bm{\sigma}$ from them. We will come back to this problem in the end of this
section.

\subsection{Pairwise distributions estimation}\label{ssec:PW_estim}

The choice of the encoding functions $\Lambda_i$ imposes the marginal distributions
of the latent variables $\sigma_i$; indeed we have seen that
\begin{equation*}
 p_i^1 = \P(\sigma_i=1) =\E[\Lambda_i(X_i)] = \int_{\X_i} \Lambda_i(x)dF_i(x).
\end{equation*}
These parameters can easily be estimated using empirical moments and it will only
remain to estimate the correlation parameter $p_{ij}^{11}$. We propose here to carry 
out a maximum likelihood estimation. The estimation of each parameter 
$p_{ij}^{11}$ is independent of the others and we carry out one unidimensional  
likelihood maximization per edge. For the sake of simplicity, we
assume that the random variables admit probability distribution functions. The
joint pdf of ($X_i,X_j$) associated to the distribution $p_{ij}$ will be referred to
as
\begin{equation*}
f^{ij}_{p_{ij}}(x_i,x_j) \egaldef \sum_{s_i,s_j}p_{ij}(s_i,s_j)f_i^{s_i}(x_i)
f_j^{s_j}(x_j),
\end{equation*}
where $f_i^{s_i}$ is the pdf associated to $dF_i^{s_i}$. Let us first
express the logarithm of the likelihood of a distribution $p_{ij}$ of
$(\sigma_i,\sigma_j)$ corresponding to the pairwise observations $\x$ described in
\eqref{eq:xobs}.
\begin{align*}
 L(\x,p_{ij}) &= \sum_{k=1}^{N_{ij}}
\log f_{p_{ij}}^{ij}(x_i^k,x_j^k)\\
&= \sum_{k=1}^{N_{ij}} \log \left(\sum_{s_i,s_j}
p_{ij}(s_i,s_j) f_i^{s_i}(x_i^k) f_j^{s_j}(x_j^k) \right).
\end{align*}
Because of the hidden variables $\sigma_i$ and $\sigma_j$, a sum appears within the
logarithms. Therefore, it will not be possible to find explicitly the distributions
 $p_{ij}$ maximizing $L(\x,p_{ij})$. The usual approach is to use the Expectation
Maximization algorithm (EM) first introduce by \citet{Dempster}. It consists in 
building a sequence of $(\sigma_i,\sigma_j)$-distribution $p_{ij}\n$ with increasing
likelihood. Using the following notation
\begin{align*}
 p_{ij}\n(s_i,s_j|x_i,x_j) &\egaldef
\P\n(\sigma_i=s_i,\sigma_j=s_j|X_i=x_i,X_j=x_j),
\end{align*}
the EM algorithm can be expressed as 
\begin{equation*}
 p_{ij}^{(n+1)} \gets \argmax_{p_{ij}}\quad \ell(p_{ij}||p_{ij}\n) \egaldef \sum_k
\sum_{s_i,s_j}p_{ij}\n(s_i,s_j|x_i^k,x_j^k)\log
p_{ij}(s_i,s_j),
\end{equation*}
The derivative of $\ell(p_{ij}||p_{ij}\n)$ with respect to $p_{ij}^{11}$ is
\begin{equation*}
 \frac{\partial\ell(p_{ij}||p_{ij}\n)}{\partial p_{ij}^{11}} =
 \sum_{k=1}^{N_{ij}}  \sum_{s_i,s_j}   \left(2\1_{\{s_i=s_j\}}-1\right)
  \frac{p_{ij}\n(s_i,s_j|x^k_i,x^k_j)}{p_{ij}(s_i,s_j)},\\
\end{equation*}
Stationary points yields an obvious solution, which is
\begin{equation*}
 p_{ij}(s_i,s_j) = \frac{1}{N_{ij}}\sum_{k=1}^{N_{ij}} p_{ij}\n(s_i,s_j|x_i^k,x_j^k).
\end{equation*}
The function that we maximize being concave, this solution is the unique stationary
point of $\ell(p_{ij}||p_{ij}\n)$. We obtain the following update rule for the EM
algorithm 
\begin{equation}\label{eq:EMglobal}
 p_{ij}^{(n+1)}(1,1) \gets \frac{1}{N_{ij}} \sum_{k=1}^{N_{ij}} 
 \frac{\psi_{ij}\n(1,1)
\Lambda_i(x_i^k)\Lambda_j(x_j^k)}{Z_{ij}(x_i^k,x_j^k)},
\end{equation}
with
\begin{equation*}
 \psi\n_{ij}(s_i,s_j) \egaldef \frac{p_{ij}\n(s_i,s_j)}{p_i\n(s_i)p_j\n(s_j)},
\end{equation*}
\begin{equation*}
 Z_{ij}(x_i,x_j) \egaldef \sum_{s_i,s_j}
\psi_{ij}\n(s_i,s_j)\Lambda_i^{s_i}(x_i)\Lambda_j^{s_j}(x_j),
\end{equation*}
and $\Lambda^1 \egaldef \Lambda$, $\Lambda^0 \egaldef 1-\Lambda$.
The update rule \eqref{eq:EMglobal} is quite simple, although one has to check that
the estimated parameter is valid, i.e.\ $p_{ij}^{11} \in \mathbb{D}(p_i^1,p_j^1)$.
If it is not the case, it means that the parameter saturates at one bound.

Now that we have proposed a way to estimate the pairwise marginal of the model, we
will focus in next section on how to estimate the Ising model of $\bm{\sigma}$ from
them.

\subsection{Latent Ising model estimation compatible with BP}\label{ssec:calibration}

We now return to the problem of estimating the joint distribution $\Psigma$ of
the random vector $\bm{\sigma}$ from its pairwise marginals $\{p_{ij}\}_{(i,j)\in\E}$.
First, let us remark that (as discussed by \citet{MacKay}) having
compatible marginals does not guaranty the existence of a joint distribution
$\Psigma$ such as 
\begin{equation*}
 \forall (i,j) \in \E, \forall s_i, \sum_{\>s_{\V\setminus \{i,j\}}} \Psigma(\bm{\sigma} 
 =\>s) =  p_{ij}(s_i,s_j),
\end{equation*}
However, in the case where the graph $\G=(\V,\E)$ contains no cycles, the joint
distribution is entirely determined by its pairwise marginals. This joint
distribution is expressed as
\begin{equation}\label{eq:InvBethe}
  \Psigma(\bm{\sigma} = \>s) = \prod_{(i,j)\in \E} 
  \frac{p_{ij}(s_i,s_j)}{p_i(s_i)p_j(s_j)} \prod_{i \in \V} p_i(s_i),
\end{equation}
with $p_i$ the marginal of $p_{ij}$ -- independent of $j$.
 
In the more general case of a graph containing cycles, the situation is more complex. This 
inverse Ising model is much studied in statistical physics (see \citet{CoMa} and
references within). Potentially it is NP-hard and can have no solution. Only
approximate methods can be used for graph of large size. \citet{Wain3} proposed an
approach of particular interest, which takes into account the fact that once the
distribution $\Psigma$ is fixed in an approximate way, the marginalisation will also
be performed in an approximate way. The idea is to use compatible approximations for these
two tasks. In our case, we wish to use the BP algorithm, described in
forthcoming Section~\ref{sec:bpalgo}, to compute the approximate marginals of $\Psigma$. It
seems reasonable to impose that, without any observation, the answer given by BP is
the historical marginals $\{p_{ij}\}$ and $\{p_i\}$. For doing so, the distribution
$\Psigma$ should be chosen under the Bethe approximation~\eqref{eq:InvBethe} which is
closely related to the BP algorithm, as we shall see in Section~\ref{sec:bpalgo}. If
this choice is a good candidate as starting point, the Bethe approximation is usually
too rough and overestimates correlations, and it is thus necessary to
improve on it. This can
be achieved using various results from linear response theory
(see~\citet{WeTe,Yasuda,MoMe}), when the level of correlation is not too high.

We use instead a simple but more robust approach, which is to modify the model
using a single parameter $\alpha$ such as 
\begin{equation}\label{eq:alphaBethe}
 \Psigma(\bm{\sigma} = \>s) = \prod_{(i,j)\in \E} 
  \left(\frac{p_{ij}(s_i,s_j)}{p_i(s_i)p_j(s_j)} \right)^\alpha \prod_{i \in \V}
p_i(s_i).
\end{equation}
$\alpha$ can roughly be interpreted as an inverse temperature, which role is to
avoid overcounting interactions when the graph contains cycles. This parameter
can easily be calibrated by finding a phase transition w.r.t.\ $\alpha$. Indeed, 
for $\alpha=0$, the BP output is exactly $\{p_i\}$ and, when $\alpha$ increases, it
remains close to it until some discontinuity appears (see \citet{FuLaAu}). In
some sense, the best $\alpha$ corresponds to the maximal interaction strength such
that the BP output remains close to $\{p_i\}$.

\section{A message passing inference algorithm}\label{sec:bpalgo}

According to the results of Section~\ref{sec:real2binary},
observations about the real-valued random variable $X_i$ are
converted into knowledge of the marginal distribution of $\sigma_i$.
In order to estimate the distributions of the others binary latent variables,
we need an inference algorithm allowing us to impose this
marginal constraint to node~$i$ when $X_i$ is observed. For this
task, we propose a modified version of the BP algorithm.

\subsection{The BP algorithm}

We present here the BP algorithm, first described by \citet{Pearl}, in
a way very similar to the one of \citet{YeFrWe3}.
We use in this section a slightly more general notation than in
Section~\ref{sec:intro}, since instead of considering only pairwise
interactions, variables in the set $\V$ interact through factors,
which are subsets $a\subset \V$ of variables. If $\F$ is
this set of factors, we consider the following probability measure
\begin{equation}
\label{eq:joint}
\P(\bm{\sigma}=\mathbf{s}) = \prod_{a \in \F} \psi_a(\mathbf{s}_a) \prod_{i \in
\V}\phi_i(s_i),
\end{equation}
where $\mathbf{s}_a=\{s_i, i\in a\}$. It is also possible to see
variables and factors as nodes of a same bipartite graph, in which case the
shorthand notation $i \in a$ should be interpreted as ``there is an
edge between $i$ and $a$''. $\F$ together with $\V$ define a factor
graph, such as defined by \citet{Kschi}. The set $\E$ of edges contains all the
couples $(a,i)\in\F\times\V$ such that $i\in a$. We denote by $d_i$ the degree of
the variable node $i$. The BP algorithm is a message passing procedure,
which output is a set of estimated marginal probabilities, the beliefs
$b_a(\textbf{s}_a)$ (including single nodes beliefs $b_i(s_i)$). The idea is to
factor the marginal probability at a given site as a product of contributions
coming from neighboring factor nodes, which are the messages. With
definition~\eqref{eq:joint} of the joint probability measure, the
updates rules read:
\begin{align}
m_{a\to i}(s_i) &\gets
\sum_{\mathbf{s}_{a\setminus i}} \psi_a(\mathbf{s}_a)\prod_{j\in a\setminus i}
n_{j\to a }(s_j), \label{urules}\\[0.2cm]
n_{i \to a}(s_i) &\egaldef \phi_i(s_i)\prod_{a'\ni i, a'\ne a}
m_{a'\to i}(s_i), \label{urulesn}
\end{align}
where the notation $\sum_{\mathbf{s}_a}$ should be understood as summing all
the variables $\sigma_i$, $i\in a\subset \V$, over the realizations
$s_i \in \{0,1\}$. In practice, the messages are often normalized so that
 $\sum_{s_i} m_{a\to i}(s_i)= 1$.

At any point of the algorithm, one can compute the current beliefs as
\begin{align}
b_i(s_i) &\egaldef
\frac{1}{Z_i}\phi_i(s_i)\prod_{a\ni i} m_{a\to i}(s_i),
\label{belief1}\\[0.2cm]
b_a(\mathbf{s}_a) &\egaldef
\frac{1}{Z_a}\psi_a(\mathbf{s}_a)\prod_{i\in a} n_{i\to a}(s_i),
\label{belief2}
\end{align}
where $Z_i$ and $Z_a$ are normalization constants that
ensure that
\begin{equation}
\label{eq:normb}
 \sum_{\sigma_i} b_i(\sigma_i) = 1,\qquad\sum_{\bm{\sigma}_a}b_a(\bm{\sigma}_a)
=1.
\end{equation}
When the algorithm has converged, the obtained beliefs $b_a$ and $b_i$
are compatible:
\begin{equation}
\sum_{\textrm{s}_{a \setminus i}} b_a(\textrm{s}_a) =
b_i(s_i).\label{eq:compat}
\end{equation}

\citet{YeFrWe3} proved that the belief propagation algorithm is an
iterative way of solving a variational problem: namely it minimizes
the Kullback-Leibler divergence $D_{KL}(b\|p)$ to the true probability measure
(\ref{eq:joint}) over all Bethe approximations on the factor graph, of
the form
\begin{equation*}
 b(\mathbf{s})
  = \prod_{a \in \F} \frac{b_a(\mathbf{s}_a)}{\prod_{i \in a}b_i(s_i)}
    \prod_{i \in \V} b_i(s_i),
\end{equation*}
subject to constraints~\eqref{eq:normb}--(\ref{eq:compat}).
The approximation is actually exact when the underlying graph is
a tree. The stationary points of the above variational problem are
beliefs at a fixed point of the BP algorithm (see~\citet{YeFrWe3}).
This alternative description of BP will be used in the next
section to derive a new variant of the algorithm.

\subsection{Imposing beliefs: mirror BP}\label{sec:incldata}

In the following, $\V^*$ will be the set of nodes $i$ such that $X_i$ is observed.
Assuming that the model ($\psi_a$ and $\phi_i$) is given, we wish to include
in the algorithm some constraints on the beliefs of the form
\begin{equation}
\label{eq:know_cons}
\forall i \in \mathbb{V}^*, \forall s_i \in \{0,1\}, b_i(s_i) =
b^*_i(s_i).
\end{equation}
We suppose in the following that each $b^*_i$ is normalized. The issue
of how to convert real-valued observation to this distribution
$b_i^*$ has been studied in Section~\ref{sec:real2binary}. We seek to
obtain a new update rule from the Kullback-Leibler divergence
minimization, with the additional constraints~\eqref{eq:know_cons}. Constraints of
this form as sometimes referred to as ``soft constraints'' in the
Bayesian community (\citet{Bilmes}).

We start from the Lagrangian of the minimization problem:
\begin{align*}
\mathcal{L}(b,\lambda) = &D_{KL}(b\|p) + \sum_{\substack{i \in \V\setminus \V^*\\ a\ni i, s_i}}
\lambda_{ai}(s_i)\Bigl(b_i(s_i)  - \sum_{\mathbf{s}_{a\setminus i}}b_a(\mathbf{
s}_a) \Bigr)\\
&+ \sum_{\substack{i \in \V^*\\a\ni i, s_i}} \lambda_{ai}(s_i)
\Bigl(b^*_i(s_i) - \sum_{\mathbf{s}_{a\setminus i}}b_a(\mathbf{s}_a) \Bigr) + \sum_{i
\in\V} \gamma_i \left(\sum_{s_i}b_i(s_i)-1\right),
\end{align*}
with $D_{KL}(b\|p)$ defined as
\begin{equation*}
D_{KL}(b\|p)
  \egaldef \sum_{a,\bm{s}_a} b_a(\bm{s}_a) \log\frac{b_a(\bm{s}_a)}{\psi_a(\bm{
s}_a)} + \sum_{i,s_i} b_i(s_i) \log \frac{b_i(s_i)^{1-d_i}}{\phi_i(s_i)}.
\end{equation*}
The stationary points satisfy
\begin{align*}
\left\{\begin{array}{l}
b_a(\mathbf{s}_a)= \psi_a(\mathbf{s}_a) \exp\Bigl(\sum_{i\in a}\lambda_{ai}(s_
i)
- 1\Bigr),\,\forall a \in \F, \\[0.2cm]
b_i(s_i)= \phi_i(s_i)
\exp\Bigl(\frac{\sum_{a\ni i}\lambda_{ai}(s_i)}{d_i-1}\!+\!1\!-\!\gamma_i\Bigr)
,\forall i \notin \V^{\!*},\\[0.2cm]
b_i(s_i)= b^*_i(s_i), \, \forall i \in \V^*.
\end{array}\right.
\end{align*}
Following~\citet{YeFrWe3}, we introduce the parametrization
\begin{equation*}
\lambda_{ai}(s_i) = \log n_{i\to a}(s_i),
\end{equation*}
for all edges $(ai) \in \E$. Note that we do not consider any node in
$\V\setminus\V^*$ of degree $d_i$ equal to $1$ since they play no role in
the minimization problem. For all nodes $i\notin\V^*$ we
also have
\begin{equation*}
\lambda_{ai}(s_i) = \log \Bigl[ \phi_i(s_i)\prod_{b\ni i, b\ne a} m_{b\to
i}(s_i)\Bigr].
\end{equation*}
Then it follows that, whenever $i \notin \V^*$,
\begin{equation*}
n_{i\to a} (s_i) = \phi_i(s_i) \prod_{b\ni i,b\ne a} m_{b\to i}(s_i).
\end{equation*}
Enforcing the compatibility constraints on nodes $i \not\in \V^*$ shows that update
rules \eqref{urules}--(\ref{urulesn}) are still valid for these nodes. For $i \in
\V^*$, the compatibility constraints yield
\begin{align*}
b^*_i(s_i)&=\sum_{\mathbf{s}_{a\setminus i}} b_a(\mathbf{s}_a) =
\sum_{\mathbf{s}_{a\setminus i}} \psi_a(\mathbf{s}_a)
\prod_{j\in a}n_{j\to a}(s_j)\\
&= n_{i \to a}(s_i) \sum_{\mathbf{s}_{a\setminus i}} \psi_a(\mathbf{s}_a)\prod_{
j\in a}n_{j\to a}(s_j).
\end{align*}
Until now the message from a factor $a$ to a variable $i \in \V^*$ has not been
defined. For convenience we define it as in \eqref{urules} and the preceding
equation becomes
\begin{equation*}
 n_{i \to a}(s_i) m_{a \to i}(s_i) = b^*_i(s_i),
\end{equation*}
as in the usual BP algorithm. This leads to a definition that replaces
\eqref{urulesn} when $i\in\V^*$
\begin{equation}
\label{eq:nuppp}
 n_{i \to a}(s_i) = \frac{b^*_i(s_i)}{m_{a \to
i}(s_i)} = \frac{b_i^*(s_i)}{b_i(x_i)}\phi_i(s_i) \prod_{b \ni i,b \ne a} m_{b
\to i}(s_i).
\end{equation}
Therefore the message~\eqref{eq:nuppp} is the BP message~\eqref{urulesn}
multiplied by the ratio of the belief we are imposing over the ``current belief''
computed using~\eqref{belief1}. This is very similar to iterative proportional
fitting (IPF, see \citet{Darroch}). To sum up, the characteristics of this new
variant of Belief Propagation are
\begin{itemize}
\item all factors and all variables which value has not been fixed
  send the same messages \eqref{urules}--(\ref{urulesn}) as in classic BP;
\item variables which value has been fixed use the new
  messages~\eqref{eq:nuppp};
\item beliefs for factors or for variables which value has not been
fixed are still computed using \eqref{belief1}--(\ref{belief2});
\end{itemize}

In the classical BP algorithm, the information sent by one node can
only go back to itself through a cycle of the graph. When
\eqref{eq:nuppp} is used, however, the variable with fixed value acts
like a mirror and sends back the message to the factor instead of
propagating it through the graph. It is to emphasize this property
that we call our new method the \emph{mirror BP} (mBP) algorithm. Note that it could
be defined for variables valued in any discrete alphabet.

A very similar algorithm to our mBP has been proposed by~\citet{WeTe2}.
Their algorithm is described as iterations of successive BP runs on unobserved
nodes and IPF on nodes in $\V^*$. The update \eqref{eq:nuppp} is just obtained
as direct IPF\@. The main drawback of their version is that it assumes a
particular update ordering because they consider that the updates 
\eqref{urulesn}--(\ref{eq:nuppp}) are of different nature, which is in fact not
really necessary and complicates its use.

\bigskip

It is known that BP can exhibit non convergent behavior in loopy networks, although
sufficient conditions for convergence are known (see
e.g.\ \citet{MooijKappen07,Tatikonda02,Ihler}). Since the mirroring behavior
of our algorithm seems to be quite different, we present some sufficient conditions
for convergence.
\begin{Def}
 Let $\mathcal{T}(\mathcal{G},\V^*)$ be the factor graph where each node $i \in \V^*$
has been cloned $d_i$ times, each clone being attached to one (and only one) neighbor
of $i$. We call ``graph cutting at $\V^*$'' the transformation
$\mathcal{T}(\cdot,\V^*)$ applied to a factor graph $\mathcal{G}$ for a given
set of variable nodes $\V^*$.
\end{Def}

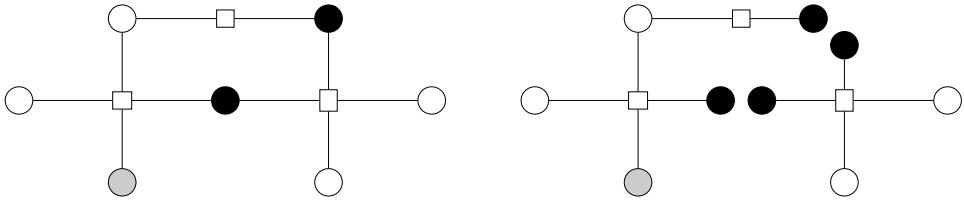
\begin{figure}
\resizebox{\textwidth}{!}{\begin{tikzpicture}[scale=1]

    \path (5,-1) node[draw,shape=circle,fill=black] (s2) {$\phantom{2}$};
    \path (0,-1) node[draw,shape=circle] (s1) {$\phantom 1$};
    \path (10,-1) node[draw,shape=circle] (s3){$\phantom 3$};
    \path (2.5,-3) node[draw,shape=circle,fill=black!20] (s4){$\phantom 4$};
    \path (7.5,-3) node[draw,shape=circle] (s5){$\phantom 5$};
    \path (2.5,1) node[draw,shape=circle] (s6){$\phantom 6$};
    \path (7.5,1) node[draw,shape=circle,fill=black] (s7){$\phantom 7$};
    
    \path (2.5,-1) node[draw,shape=rectangle] (fa) {$\phantom a$};
    \path (7.5,-1) node[draw,shape=rectangle] (fb) {$\phantom b$};
    \path (5,1) node[draw,shape=rectangle] (fd) {$\phantom c$};

    \path[-] (s1) edge (fa);
    \path[-] (fa) edge (s2);
    \path[-] (fa) edge (s4);
    \path[-] (fb) edge (s2);
    \path[-] (fb) edge (s5);
    \path[-] (fb) edge (s3);
    \path[-] (fa) edge (s6);
    \path[-] (fd) edge (s7);
    \path[-] (fd) edge (s6);
    \path[-] (fb) edge (s7);

    \path (17,-1) node[draw,shape=circle,fill=black] (s2p) {$\phantom 2$};
    \path (18,-1) node[draw,shape=circle,fill=black] (s2pp) {$\phantom 2$};
    \path (12.5,-1) node[draw,shape=circle] (s1p) {$\phantom 1$};
    \path (22.5,-1) node[draw,shape=circle] (s3p){$\phantom 3$};
    \path (20,-3) node[draw,shape=circle] (s5p){$\phantom5$};
    \path (15,-1) node[draw,shape=rectangle] (fap) {$\phantom a$};
    \path (20,-1) node[draw,shape=rectangle] (fbp) {$\phantom b$};
    \path (15,1) node[draw,shape=circle] (s6p){$\phantom6$};
    \path (15,-3) node[draw,shape=circle,fill=black!20] (s4p){$\phantom4$};
    \path (19.25,1) node[draw,shape=circle,fill=black] (s7p){$\phantom7$};
    \path (20,0.35) node[draw,shape=circle,fill=black] (s7pp){$\phantom7$};
    \path (17.5,1) node[draw,shape=rectangle] (fdp) {$\phantom c$};

    \path[-] (fap) edge (s1p);
    \path[-] (fap) edge (s6p);
    \path[-] (fdp) edge (s6p);
    \path[-] (fdp) edge (s7p);
    \path[-] (fbp) edge (s7pp);
    \path[-] (fbp) edge (s3p);
    \path[-] (fbp) edge (s5p);
    \path[-] (fap) edge (s2p);
    \path[-] (fbp) edge (s2pp);
    \path[-] (fap) edge (s4p);

\end{tikzpicture}}
 \caption{Illustration of Proposition~\ref{prop:stability}. If only black nodes
 are in $\V^*$, Proposition~\ref{prop:stability} tells us that mBP converges
 since the resulting graph $\mathcal{T}(\mathcal{G},\V^*)$
(right graph) contains two disconnected trees with exactly two nodes in
$\V^*$. If we add the gray node in $\V^*$ then Proposition~\ref{prop:stability}
does not apply, the right tree contains three nodes in $\V^*$, and we
cannot conclude about convergence. However, on the other part of the graph
Proposition~\ref{prop:stability} still holds.}
\label{fig:exconv}
\end{figure}
Example of a such ``graph cutting'' $\mathcal{T}(\mathcal{G},\V^*)$ is shown in
Figure~\ref{fig:exconv}. The following proposition describes cases where the mBP
algorithm is guaranteed to converge.

\begin{proposition}
\label{prop:stability}
If the graph $\mathcal{T}(\mathcal{G},\V^*)$ is formed by disconnected
trees containing not more than two leaves cloned from $\V^*$, the
mBP algorithm is stable and converges to a unique fixed point.
\end{proposition}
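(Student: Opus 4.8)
The plan is to show that ``cutting the graph at $\V^*$'' turns mBP into an ordinary-plus-mirrored message-passing recursion on the \emph{forest} $\mathcal{T}(\mathcal{G},\V^*)$, on which the dynamics is essentially transparent, and then to isolate the only piece that is not settled in finitely many steps — a chain running between two mirror leaves — and show that it is governed by a strict contraction. \emph{Step 1 (cutting does not change mBP).} By~\eqref{eq:nuppp} the message leaving a node $i\in\V^*$ along an edge $(i,a)$ is $n_{i\to a}=b_i^*/m_{a\to i}$, which depends \emph{only} on the message incoming along that same edge, and not on the messages $m_{b\to i}$ from the other factors $b\ni i$. Hence the update equations around $i$ decouple across its $d_i$ incident edges, and are exactly the equations one obtains after replacing $i$ by $d_i$ independent degree-one clones, one per neighbour — i.e.\ after passing to $\mathcal{T}(\mathcal{G},\V^*)$; updates at factors and at nodes $\notin\V^*$ are local and unaffected. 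So the mBP message map, for \emph{any} update schedule, and the resulting beliefs are identical on $\mathcal{G}$ and on $\mathcal{T}(\mathcal{G},\V^*)$, and since the latter is a disjoint union of trees each component may be treated separately.

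\emph{Step 2 (at most one mirror leaf, and the reduction for two).} In $\mathcal{T}(\mathcal{G},\V^*)$ every node of $\V^*$ has become a leaf, so a tree component carries zero, one, or two such ``mirror leaves''. With none, mBP on it is ordinary BP on a tree and converges in at most (diameter) synchronous steps to its unique fixed point. With exactly one, root the tree there: the messages directed toward the root obey the ordinary BP recursion and freeze from the leaves inward in finitely many steps, then the incoming message at the mirror leaf freezes, then its reflected outgoing message, then all messages directed away from the root layer by layer — again finite convergence to a unique fixed point. If the component has two mirror leaves $u,w$, let $P:\,u-a_1-v_1-a_2-\cdots-v_{r-1}-a_r-w$ be the unique path joining them (alternating factors $a_\ell$ and non-mirror nodes $v_\ell$). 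Each subtree hanging off an internal node of $P$ has only non-mirror leaves, so its contribution to the adjacent node of $P$ obeys ordinary BP and freezes in finitely many steps; absorbing these frozen contributions together with the fields $\phi_{v_\ell}$ into fixed, strictly positive ``external fields'' (we assume all $\psi_a,\phi_i>0$, as holds for the $\alpha$-Bethe model of Section~\ref{ssec:calibration} under mild conditions; the degenerate case needs a separate limiting argument), mBP on the component becomes, after a finite transient, mBP on the bare path $P$.

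\emph{Step 3 (the round-trip contraction).} Equip positive $2$-vectors modulo scaling with the Hilbert projective metric; for binary messages this is just the Euclidean metric on the log-odds, hence a complete metric space. Along $P$, the ``rightward sweep'' sending the normalised message $n_{u\to a_1}$ to $m_{a_r\to w}$ is a composition of multiplications by the factor tables $\psi_{a_\ell}$ — which by Birkhoff's theorem are strict Hilbert contractions with ratios $\kappa_{a_\ell}=\tanh(\Delta(\psi_{a_\ell})/4)<1$, finite because $\psi_{a_\ell}>0$ — and of componentwise multiplications by the fixed external fields, which are Hilbert isometries; so the rightward sweep contracts by $\prod_\ell\kappa_{a_\ell}<1$, and symmetrically the leftward sweep. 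The reflections $m\mapsto b_w^*/m$ at $w$ and $m\mapsto b_u^*/m$ at $u$ are componentwise inversions followed by multiplication by a fixed positive vector, hence Hilbert isometries. Therefore the ``round-trip'' map $\Phi$ (rightward sweep, reflect at $w$, leftward sweep, reflect at $u$) is a strict contraction of ratio $(\prod_\ell\kappa_{a_\ell})^2<1$, so by the Banach fixed-point theorem it has a unique fixed point, to which its iterates converge. For a schedule that cyclically sweeps $P$ this iteration \emph{is} mBP; for the synchronous schedule one checks that, after the transient, the left- and right-travelling messages are each refreshed exactly once per round trip and that the whole message vector converges to the corresponding fixed point. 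Undoing Steps~1--2 yields the claimed stability, convergence and uniqueness on $\mathcal{G}$.

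\emph{Main obstacle.} The crux is Step~3: identifying the metric in which mirror reflections are \emph{isometries} — the Hilbert-metric invariance of componentwise inversion is exactly what makes them harmless — while the factor passes are strict contractions, and then checking that the parallel update restricted to $P$ really is governed, after the finite transient, by iteration of the round-trip map $\Phi$ rather than by some genuinely higher-dimensional dynamics. The hypothesis ``not more than two leaves cloned from $\V^*$'' is precisely what confines the unsettled dynamics to a single chain with two reflecting ends; with three or more mirror leaves one faces a subtree with three reflecting ends, and a composition of three reflect-and-sweep maps need not contract — consistent with the observation in Figure~\ref{fig:exconv} that in that case no conclusion about convergence is available.
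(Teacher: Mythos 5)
Your proposal is correct, and Steps 1--2 (observing from~\eqref{eq:nuppp} that a node in $\V^*$ reflects each incoming message independently of the others, so that fixing beliefs acts exactly as the cutting $\mathcal{T}(\cdot,\V^*)$, and then reducing a two-mirror-leaf component to a chain by absorbing the frozen subtree messages into the local potentials) coincide with the paper's reduction. Where you genuinely diverge is the core convergence argument. The paper collapses the chain into a single effective $2\times2$ factor $\tilde\psi$ between the two observed variables and analyses the resulting synchronous scalar recursion $u_{n+2}=f(u_n)$, where $f$ is a M\"obius map of constant monotonicity: bounded monotone subsequences converge, the fixed points are roots of a quadratic so there are at most two, and a sign/parity argument at the endpoints forces exactly one (Lemma~\ref{lem:u_conv}), with the degenerate cases ($f$ constant) handled explicitly. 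You instead keep the chain intact and work in the Hilbert projective metric: the factor passes are Birkhoff contractions with ratio $\tanh(\Delta(\psi_a)/4)<1$, the field multiplications and, crucially, the mirror reflections $m\mapsto b^*/m$ are isometries, so the round-trip map is a strict contraction and Banach's theorem applies. What each approach buys: the paper's argument is elementary, purely binary, and tolerates boundary cases (vanishing entries of the effective factor or of $b^*$) that your contraction needs strict positivity to handle -- a restriction you correctly flag, and which matters since the Bethe-type couplings~\eqref{eq:alphaBethe} can saturate; conversely, your argument yields an explicit geometric rate $\bigl(\prod_\ell\kappa_{a_\ell}\bigr)^2$ and extends verbatim to latent variables on larger discrete alphabets, where the paper's scalar M\"obius analysis would not apply. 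Your treatment of the synchronous schedule (the message dynamics splits, after a finite transient, into finitely many interleaved orbits of the round-trip map, all converging to its unique fixed point) is sketched rather than detailed, but it is at a comparable level of rigor to the paper's own passage from the two-node lemma to the chain of Figure~\ref{fig:longpp}.
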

\begin{proof}
See Appendix.
\end{proof}

\section{Numerical experiments}\label{sec:simul}

In order to understand its behavior, we apply here the method described
in this paper to synthetic data. We will consider three cases of increasing complexity: 
\begin{itemize}
 \item a pair ($X_1,X_2$) of real-valued random variables,
 \item a tree with interior connectivity fixed,
 \item a rough road traffic network.
\end{itemize}

For each case, we repeat the following \emph{decimation experiment}: for an outcome
of the random vector $\>X$, we observe its components $X_i$ in a random order and we
make prediction about unobserved components using our method. This will allow
us to compare the performance of the different choices of encoding and
decoding functions when the proportion of observed variables varies.
 
We will consider the following choices for the encoding and decoding functions:
\begin{itemize}
 \item the step function $\Lambda_\text{MI}$ with the Bayesian decoding function
       \eqref{eq:GpMI},
 \item the cumulative distribution function $F$ with its inverse $\GammaD =
       F^{-1}$,
 \item the cumulative distribution function $F$ with the decoding
       function $\GammaP$ of~\eqref{eq:GpS}.
\end{itemize}
Each of these choices yields an estimator $\theta$ for which we will compute the
performance w.r.t.\ the $L^1$ norm
\begin{equation}\label{eq:L1perf}
 \E_X\Bigl[|\theta(X) - X|\Bigr].
\end{equation}

\paragraph{Model generation.}
These synthetic models are based on Gaussian copulas with support corresponding to one of 
the three cases previously described. More precisely, it corresponds to the support of
the precision matrix, i.e.\ the inverse covariance matrix, of the Gaussian vector $\>Y$.
For doing so, we randomly generate the partial correlations, the entries of the precision
matrix of $\>Y$, with uniform random variables on $[-1,-0.2]\cup[0.2,1]$. Since this will
not always lead to a positive definite precision matrix, we use this matrix as a
starting point and reduce the highest correlation until it becomes definite positive.

We can then generate outcomes of this Gaussian vector $\>Y$ and transform them, using
the function which maps a Gaussian variable $\mathcal{N}(0,1)$ into a variable of
chosen cdf $F_X$. More precisely, each component of the vector $\>X$ is defined as
\begin{equation*}
 X_i = F^{-1}_X\left(F_{\N(0,1)}(Y_i)\right),
\end{equation*}
where $F_{\N(0,1)}$ is the cdf of a $\N(0,1)$ variable. This procedure will allow
us to perform exact inference using the Gaussian vector $\>Y$ while the dependency 
of the vector $\>X$ is based on a Gaussian copula. 

We will sometimes consider the case of $\beta(a,b)$ variables,
so let us recall their pdfs $f^\beta_{a,b}$ for $a,b \in ]0,+\infty[$ 
\begin{equation*}
  f^\beta_{a,b}(x) = \frac{1}{B(a,b)}x^{a-1}(1-x)^{b-1}\1_{[0,1]}(x),
\end{equation*}
where the normalization constant $B(a,b)$ is the bêta function.
These distributions are of particular interest because different cases arise
depending on the parameters $a$ and $b$. Indeed, it is possible to obtain almost
binary ($a,b\to 0$), unimodal ($a,b>1$) or uniform ($a,b=1$) distributions on
$[0,1]$.

\paragraph{A pair $(X_1,X_2)$ of real-valued random variables.}
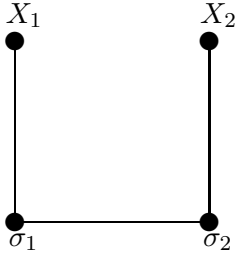
\begin{figure}
\centering
\setlength{\unitlength}{0.8mm}
\begin{picture}(30, 42)(0,0)
  \put(0,7){\color{black}{\circle*{3}}} 
  \put(-1,3){$\sigma_1$}
  \put(1.5,7){\line(1,0){30}}
  \put(32,7){\color{black}{\circle*{3}}} 
  \put(31,3){$\sigma_2$}
  \put(0,37){\color{black}{\circle*{3}}} 
  \put(-1.5,40){$X_1$}
  \put(0,8.5){\line(0,1){27}}
  \put(32,37){\color{black}{\circle*{3}}} 
  \put(30.5,40){$X_2$}
  \put(32,8.5){\line(0,1){27}}
\end{picture}
\caption{Model of the random vector $(X_1,X_2,\sigma_1,\sigma_2)$. The true
distribution of $(X_1,X_2)$ is approximated through the latent
variables $\sigma_1$ and $\sigma_2$.}
\label{fig:simple_PW}
\end{figure}
\begin{table}
\centering
\begin{tabular}{@{}cccccc@{}} \toprule
Density & Correlation  & $(F,\Gamma^\mathcal{D})$ & $(F,\Gamma^\mathcal{P})$ &
$\Lambda_\mathrm{MI}$ & Exact \\ \bottomrule
\multirow{4}{*}[-2.5pt]{\resizebox{0.16\textwidth}{!}{
\begingroup
  \makeatletter
  \providecommand\color[2][]{%
    \GenericError{(gnuplot) \space\space\space\@spaces}{%
      Package color not loaded in conjunction with
      terminal option `colourtext'%
    }{See the gnuplot documentation for explanation.%
    }{Either use 'blacktext' in gnuplot or load the package
      color.sty in LaTeX.}%
    \renewcommand\color[2][]{}%
  }%
  \providecommand\includegraphics[2][]{%
    \GenericError{(gnuplot) \space\space\space\@spaces}{%
      Package graphicx or graphics not loaded%
    }{See the gnuplot documentation for explanation.%
    }{The gnuplot epslatex terminal needs graphicx.sty or graphics.sty.}%
    \renewcommand\includegraphics[2][]{}%
  }%
  \providecommand\rotatebox[2]{#2}%
  \@ifundefined{ifGPcolor}{%
    \newif\ifGPcolor
    \GPcolortrue
  }{}%
  \@ifundefined{ifGPblacktext}{%
    \newif\ifGPblacktext
    \GPblacktexttrue
  }{}%
  \let\gplgaddtomacro\g@addto@macro
  \gdef\gplbacktext{}%
  \gdef\gplfronttext{}%
  \makeatother
  \ifGPblacktext
    \def\colorrgb#1{}%
    \def\colorgray#1{}%
  \else
    \ifGPcolor
      \def\colorrgb#1{\color[rgb]{#1}}%
      \def\colorgray#1{\color[gray]{#1}}%
      \expandafter\def\csname LTw\endcsname{\color{white}}%
      \expandafter\def\csname LTb\endcsname{\color{black}}%
      \expandafter\def\csname LTa\endcsname{\color{black}}%
      \expandafter\def\csname LT0\endcsname{\color[rgb]{1,0,0}}%
      \expandafter\def\csname LT1\endcsname{\color[rgb]{0,1,0}}%
      \expandafter\def\csname LT2\endcsname{\color[rgb]{0,0,1}}%
      \expandafter\def\csname LT3\endcsname{\color[rgb]{1,0,1}}%
      \expandafter\def\csname LT4\endcsname{\color[rgb]{0,1,1}}%
      \expandafter\def\csname LT5\endcsname{\color[rgb]{1,1,0}}%
      \expandafter\def\csname LT6\endcsname{\color[rgb]{0,0,0}}%
      \expandafter\def\csname LT7\endcsname{\color[rgb]{1,0.3,0}}%
      \expandafter\def\csname LT8\endcsname{\color[rgb]{0.5,0.5,0.5}}%
    \else
      \def\colorrgb#1{\color{black}}%
      \def\colorgray#1{\color[gray]{#1}}%
      \expandafter\def\csname LTw\endcsname{\color{white}}%
      \expandafter\def\csname LTb\endcsname{\color{black}}%
      \expandafter\def\csname LTa\endcsname{\color{black}}%
      \expandafter\def\csname LT0\endcsname{\color{black}}%
      \expandafter\def\csname LT1\endcsname{\color{black}}%
      \expandafter\def\csname LT2\endcsname{\color{black}}%
      \expandafter\def\csname LT3\endcsname{\color{black}}%
      \expandafter\def\csname LT4\endcsname{\color{black}}%
      \expandafter\def\csname LT5\endcsname{\color{black}}%
      \expandafter\def\csname LT6\endcsname{\color{black}}%
      \expandafter\def\csname LT7\endcsname{\color{black}}%
      \expandafter\def\csname LT8\endcsname{\color{black}}%
    \fi
  \fi
  \setlength{\unitlength}{0.0500bp}%
  \begin{picture}(7200.00,5040.00)%
    \gplgaddtomacro\gplbacktext{%
      \csname LTb\endcsname%
      \put(1440,3527){\makebox(0,0)[l]{\strut{}\fontsize{45}{45}\selectfont$\beta(\nicefrac{1}{10},\nicefrac{1}{10})$}}%
    }%
    \gplgaddtomacro\gplfronttext{%
    }%
    \gplbacktext
    \put(0,0){\includegraphics{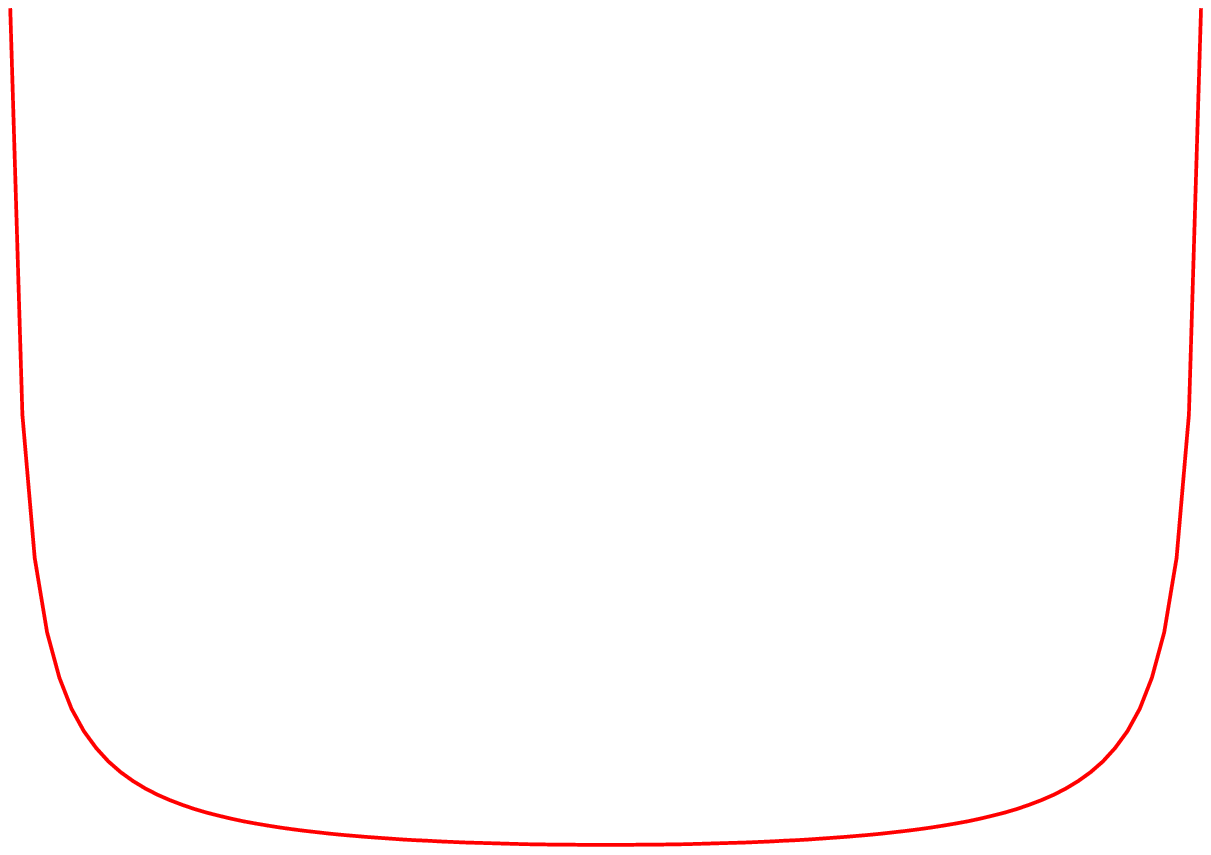}}%
    \gplfronttext
  \end{picture}%
\endgroup
}}
& \multirow{2}{*}{$\rho=0.5$} &
\textbf{\numprint{0.2}\%} &
\numprint{3.1}\% & \numprint{3.3}\% &  \numprint{32.04}\\ 
& & \numprint{-0.34} & \numprint{-0.79} & \numprint{0.01}& \numprint{0}\\
\cmidrule{2-6}
&\multirow{2}{*}{$\rho=0.9$} & \textbf{\numprint{0.1}\%} & \numprint{22}\% &
\numprint{9.5}\% & \numprint{13.62}\\
& & \numprint{-0.15} & \numprint{-0.72} & \numprint{-0.06} & \numprint{0}
\\\bottomrule

\multirow{4}{*}[-2.5pt]{\resizebox{0.16\textwidth}{!}{
\begingroup
  \makeatletter
  \providecommand\color[2][]{%
    \GenericError{(gnuplot) \space\space\space\@spaces}{%
      Package color not loaded in conjunction with
      terminal option `colourtext'%
    }{See the gnuplot documentation for explanation.%
    }{Either use 'blacktext' in gnuplot or load the package
      color.sty in LaTeX.}%
    \renewcommand\color[2][]{}%
  }%
  \providecommand\includegraphics[2][]{%
    \GenericError{(gnuplot) \space\space\space\@spaces}{%
      Package graphicx or graphics not loaded%
    }{See the gnuplot documentation for explanation.%
    }{The gnuplot epslatex terminal needs graphicx.sty or graphics.sty.}%
    \renewcommand\includegraphics[2][]{}%
  }%
  \providecommand\rotatebox[2]{#2}%
  \@ifundefined{ifGPcolor}{%
    \newif\ifGPcolor
    \GPcolortrue
  }{}%
  \@ifundefined{ifGPblacktext}{%
    \newif\ifGPblacktext
    \GPblacktexttrue
  }{}%
  \let\gplgaddtomacro\g@addto@macro
  \gdef\gplbacktext{}%
  \gdef\gplfronttext{}%
  \makeatother
  \ifGPblacktext
    \def\colorrgb#1{}%
    \def\colorgray#1{}%
  \else
    \ifGPcolor
      \def\colorrgb#1{\color[rgb]{#1}}%
      \def\colorgray#1{\color[gray]{#1}}%
      \expandafter\def\csname LTw\endcsname{\color{white}}%
      \expandafter\def\csname LTb\endcsname{\color{black}}%
      \expandafter\def\csname LTa\endcsname{\color{black}}%
      \expandafter\def\csname LT0\endcsname{\color[rgb]{1,0,0}}%
      \expandafter\def\csname LT1\endcsname{\color[rgb]{0,1,0}}%
      \expandafter\def\csname LT2\endcsname{\color[rgb]{0,0,1}}%
      \expandafter\def\csname LT3\endcsname{\color[rgb]{1,0,1}}%
      \expandafter\def\csname LT4\endcsname{\color[rgb]{0,1,1}}%
      \expandafter\def\csname LT5\endcsname{\color[rgb]{1,1,0}}%
      \expandafter\def\csname LT6\endcsname{\color[rgb]{0,0,0}}%
      \expandafter\def\csname LT7\endcsname{\color[rgb]{1,0.3,0}}%
      \expandafter\def\csname LT8\endcsname{\color[rgb]{0.5,0.5,0.5}}%
    \else
      \def\colorrgb#1{\color{black}}%
      \def\colorgray#1{\color[gray]{#1}}%
      \expandafter\def\csname LTw\endcsname{\color{white}}%
      \expandafter\def\csname LTb\endcsname{\color{black}}%
      \expandafter\def\csname LTa\endcsname{\color{black}}%
      \expandafter\def\csname LT0\endcsname{\color{black}}%
      \expandafter\def\csname LT1\endcsname{\color{black}}%
      \expandafter\def\csname LT2\endcsname{\color{black}}%
      \expandafter\def\csname LT3\endcsname{\color{black}}%
      \expandafter\def\csname LT4\endcsname{\color{black}}%
      \expandafter\def\csname LT5\endcsname{\color{black}}%
      \expandafter\def\csname LT6\endcsname{\color{black}}%
      \expandafter\def\csname LT7\endcsname{\color{black}}%
      \expandafter\def\csname LT8\endcsname{\color{black}}%
    \fi
  \fi
  \setlength{\unitlength}{0.0500bp}%
  \begin{picture}(7200.00,5040.00)%
    \gplgaddtomacro\gplbacktext{%
      \csname LTb\endcsname%
      \put(1440,3527){\makebox(0,0)[l]{\strut{}\fontsize{45}{45}\selectfont$\beta(2,3)$}}%
    }%
    \gplgaddtomacro\gplfronttext{%
    }%
    \gplbacktext
    \put(0,0){\includegraphics{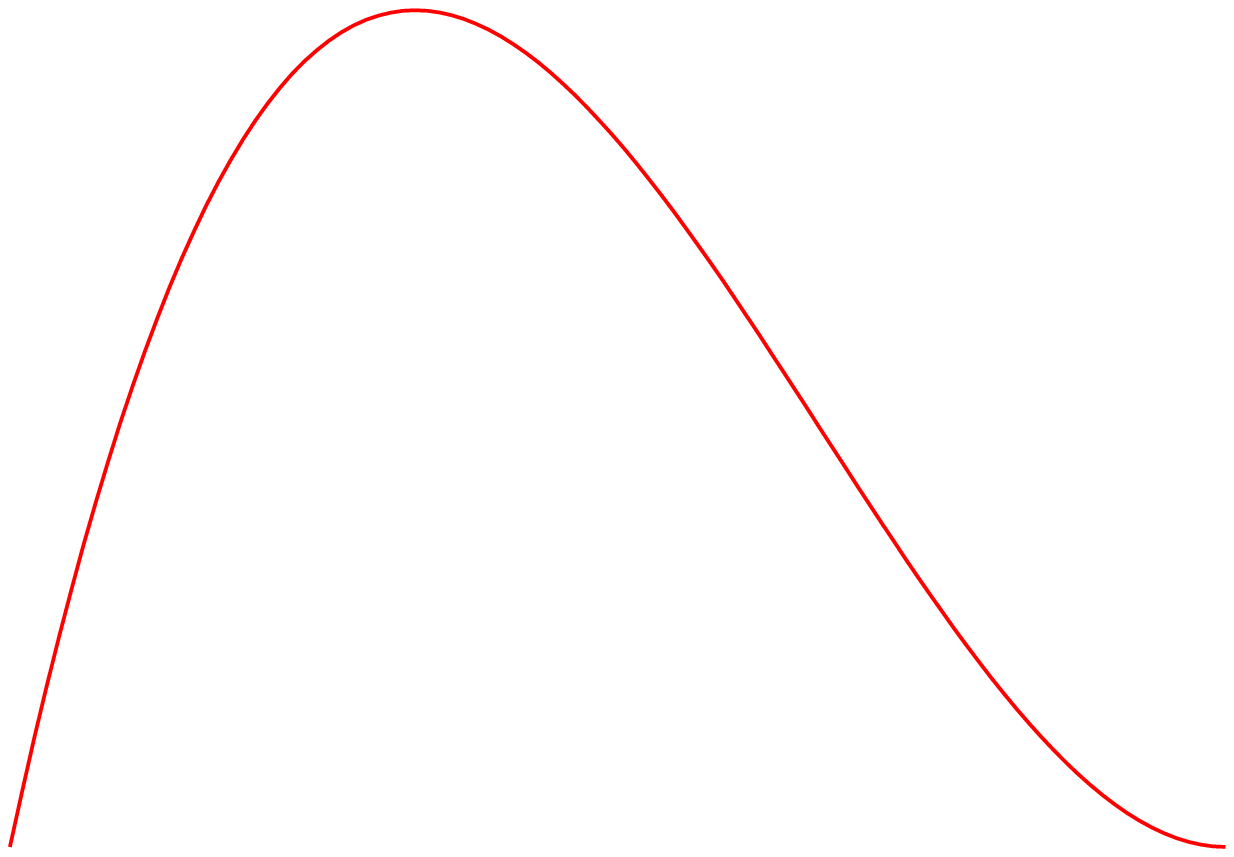}}%
    \gplfronttext
  \end{picture}%
\endgroup
}}
&\multirow{2}{*}{$\rho=0.5$} & \textbf{\numprint{1.4}\%} & \numprint{4.4}\% &
\numprint{6.7}\% &  \numprint{14.22}\\
& & \numprint{-0.26}& \numprint{0.47} & \numprint{0.44}& 
\numprint{0}\\ \cmidrule{2-6}
&\multirow{2}{*}{$\rho=0.9$} & \textbf{\numprint{1.3}\%} & \numprint{68.8}\% &
\numprint{61.7}\% &  \numprint{6.89}\\
& & \numprint{0.1} & \numprint{1.25} & \numprint{0.92} &
\numprint{0}\\\bottomrule

\multirow{4}{*}[-2.5pt]{\resizebox{0.16\textwidth}{!}{
\begingroup
  \makeatletter
  \providecommand\color[2][]{%
    \GenericError{(gnuplot) \space\space\space\@spaces}{%
      Package color not loaded in conjunction with
      terminal option `colourtext'%
    }{See the gnuplot documentation for explanation.%
    }{Either use 'blacktext' in gnuplot or load the package
      color.sty in LaTeX.}%
    \renewcommand\color[2][]{}%
  }%
  \providecommand\includegraphics[2][]{%
    \GenericError{(gnuplot) \space\space\space\@spaces}{%
      Package graphicx or graphics not loaded%
    }{See the gnuplot documentation for explanation.%
    }{The gnuplot epslatex terminal needs graphicx.sty or graphics.sty.}%
    \renewcommand\includegraphics[2][]{}%
  }%
  \providecommand\rotatebox[2]{#2}%
  \@ifundefined{ifGPcolor}{%
    \newif\ifGPcolor
    \GPcolortrue
  }{}%
  \@ifundefined{ifGPblacktext}{%
    \newif\ifGPblacktext
    \GPblacktexttrue
  }{}%
  \let\gplgaddtomacro\g@addto@macro
  \gdef\gplbacktext{}%
  \gdef\gplfronttext{}%
  \makeatother
  \ifGPblacktext
    \def\colorrgb#1{}%
    \def\colorgray#1{}%
  \else
    \ifGPcolor
      \def\colorrgb#1{\color[rgb]{#1}}%
      \def\colorgray#1{\color[gray]{#1}}%
      \expandafter\def\csname LTw\endcsname{\color{white}}%
      \expandafter\def\csname LTb\endcsname{\color{black}}%
      \expandafter\def\csname LTa\endcsname{\color{black}}%
      \expandafter\def\csname LT0\endcsname{\color[rgb]{1,0,0}}%
      \expandafter\def\csname LT1\endcsname{\color[rgb]{0,1,0}}%
      \expandafter\def\csname LT2\endcsname{\color[rgb]{0,0,1}}%
      \expandafter\def\csname LT3\endcsname{\color[rgb]{1,0,1}}%
      \expandafter\def\csname LT4\endcsname{\color[rgb]{0,1,1}}%
      \expandafter\def\csname LT5\endcsname{\color[rgb]{1,1,0}}%
      \expandafter\def\csname LT6\endcsname{\color[rgb]{0,0,0}}%
      \expandafter\def\csname LT7\endcsname{\color[rgb]{1,0.3,0}}%
      \expandafter\def\csname LT8\endcsname{\color[rgb]{0.5,0.5,0.5}}%
    \else
      \def\colorrgb#1{\color{black}}%
      \def\colorgray#1{\color[gray]{#1}}%
      \expandafter\def\csname LTw\endcsname{\color{white}}%
      \expandafter\def\csname LTb\endcsname{\color{black}}%
      \expandafter\def\csname LTa\endcsname{\color{black}}%
      \expandafter\def\csname LT0\endcsname{\color{black}}%
      \expandafter\def\csname LT1\endcsname{\color{black}}%
      \expandafter\def\csname LT2\endcsname{\color{black}}%
      \expandafter\def\csname LT3\endcsname{\color{black}}%
      \expandafter\def\csname LT4\endcsname{\color{black}}%
      \expandafter\def\csname LT5\endcsname{\color{black}}%
      \expandafter\def\csname LT6\endcsname{\color{black}}%
      \expandafter\def\csname LT7\endcsname{\color{black}}%
      \expandafter\def\csname LT8\endcsname{\color{black}}%
    \fi
  \fi
  \setlength{\unitlength}{0.0500bp}%
  \begin{picture}(7200.00,5040.00)%
    \gplgaddtomacro\gplbacktext{%
      \csname LTb\endcsname%
     
\put(1440,3527){\makebox(0,0)[l]{\strut{}\fontsize{45}{45}\selectfont$\beta(1,1)$}}%

    }%
    \gplgaddtomacro\gplfronttext{%
    }%
    \gplbacktext
    \put(0,0){\includegraphics{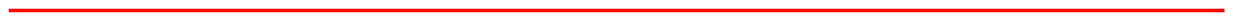}}%
    \gplfronttext
  \end{picture}%
\endgroup
}}
&\multirow{2}{*}{$\rho=0.5$} & \textbf{\numprint{0.1}\%} & \numprint{4.6}\% &
\numprint{7.3}\% &  \numprint{20.96}\\ 
& & \numprint{0.02}& \numprint{0.01} & \numprint{-0.03}& 
\numprint{0}\\ \cmidrule{2-6}
&\multirow{2}{*}{$\rho=0.9$} & \textbf{\numprint{0.4}\%} & \numprint{66.2}\% &
\numprint{58.6}\% &  \numprint{9.83}\\
& &  \numprint{0.02} & \numprint{-0.09} & \numprint{0} &  \numprint{0} \\\bottomrule

\multirow{4}{*}[-2.5pt]{\resizebox{0.16\linewidth}{!}{
\begingroup
  \makeatletter
  \providecommand\color[2][]{%
    \GenericError{(gnuplot) \space\space\space\@spaces}{%
      Package color not loaded in conjunction with
      terminal option `colourtext'%
    }{See the gnuplot documentation for explanation.%
    }{Either use 'blacktext' in gnuplot or load the package
      color.sty in LaTeX.}%
    \renewcommand\color[2][]{}%
  }%
  \providecommand\includegraphics[2][]{%
    \GenericError{(gnuplot) \space\space\space\@spaces}{%
      Package graphicx or graphics not loaded%
    }{See the gnuplot documentation for explanation.%
    }{The gnuplot epslatex terminal needs graphicx.sty or graphics.sty.}%
    \renewcommand\includegraphics[2][]{}%
  }%
  \providecommand\rotatebox[2]{#2}%
  \@ifundefined{ifGPcolor}{%
    \newif\ifGPcolor
    \GPcolortrue
  }{}%
  \@ifundefined{ifGPblacktext}{%
    \newif\ifGPblacktext
    \GPblacktexttrue
  }{}%
  \let\gplgaddtomacro\g@addto@macro
  \gdef\gplbacktext{}%
  \gdef\gplfronttext{}%
  \makeatother
  \ifGPblacktext
    \def\colorrgb#1{}%
    \def\colorgray#1{}%
  \else
    \ifGPcolor
      \def\colorrgb#1{\color[rgb]{#1}}%
      \def\colorgray#1{\color[gray]{#1}}%
      \expandafter\def\csname LTw\endcsname{\color{white}}%
      \expandafter\def\csname LTb\endcsname{\color{black}}%
      \expandafter\def\csname LTa\endcsname{\color{black}}%
      \expandafter\def\csname LT0\endcsname{\color[rgb]{1,0,0}}%
      \expandafter\def\csname LT1\endcsname{\color[rgb]{0,1,0}}%
      \expandafter\def\csname LT2\endcsname{\color[rgb]{0,0,1}}%
      \expandafter\def\csname LT3\endcsname{\color[rgb]{1,0,1}}%
      \expandafter\def\csname LT4\endcsname{\color[rgb]{0,1,1}}%
      \expandafter\def\csname LT5\endcsname{\color[rgb]{1,1,0}}%
      \expandafter\def\csname LT6\endcsname{\color[rgb]{0,0,0}}%
      \expandafter\def\csname LT7\endcsname{\color[rgb]{1,0.3,0}}%
      \expandafter\def\csname LT8\endcsname{\color[rgb]{0.5,0.5,0.5}}%
    \else
      \def\colorrgb#1{\color{black}}%
      \def\colorgray#1{\color[gray]{#1}}%
      \expandafter\def\csname LTw\endcsname{\color{white}}%
      \expandafter\def\csname LTb\endcsname{\color{black}}%
      \expandafter\def\csname LTa\endcsname{\color{black}}%
      \expandafter\def\csname LT0\endcsname{\color{black}}%
      \expandafter\def\csname LT1\endcsname{\color{black}}%
      \expandafter\def\csname LT2\endcsname{\color{black}}%
      \expandafter\def\csname LT3\endcsname{\color{black}}%
      \expandafter\def\csname LT4\endcsname{\color{black}}%
      \expandafter\def\csname LT5\endcsname{\color{black}}%
      \expandafter\def\csname LT6\endcsname{\color{black}}%
      \expandafter\def\csname LT7\endcsname{\color{black}}%
      \expandafter\def\csname LT8\endcsname{\color{black}}%
    \fi
  \fi
  \setlength{\unitlength}{0.0500bp}%
  \begin{picture}(7200.00,5040.00)%
    \gplgaddtomacro\gplbacktext{%
      \csname LTb\endcsname%
      \put(1440,3527){\makebox(0,0)[l]{\strut{}\fontsize{45}{45}\selectfont$\beta(\nicefrac{1}{2},\nicefrac{2}{10})$}}%
    }%
    \gplgaddtomacro\gplfronttext{%
    }%
    \gplbacktext
    \put(0,0){\includegraphics{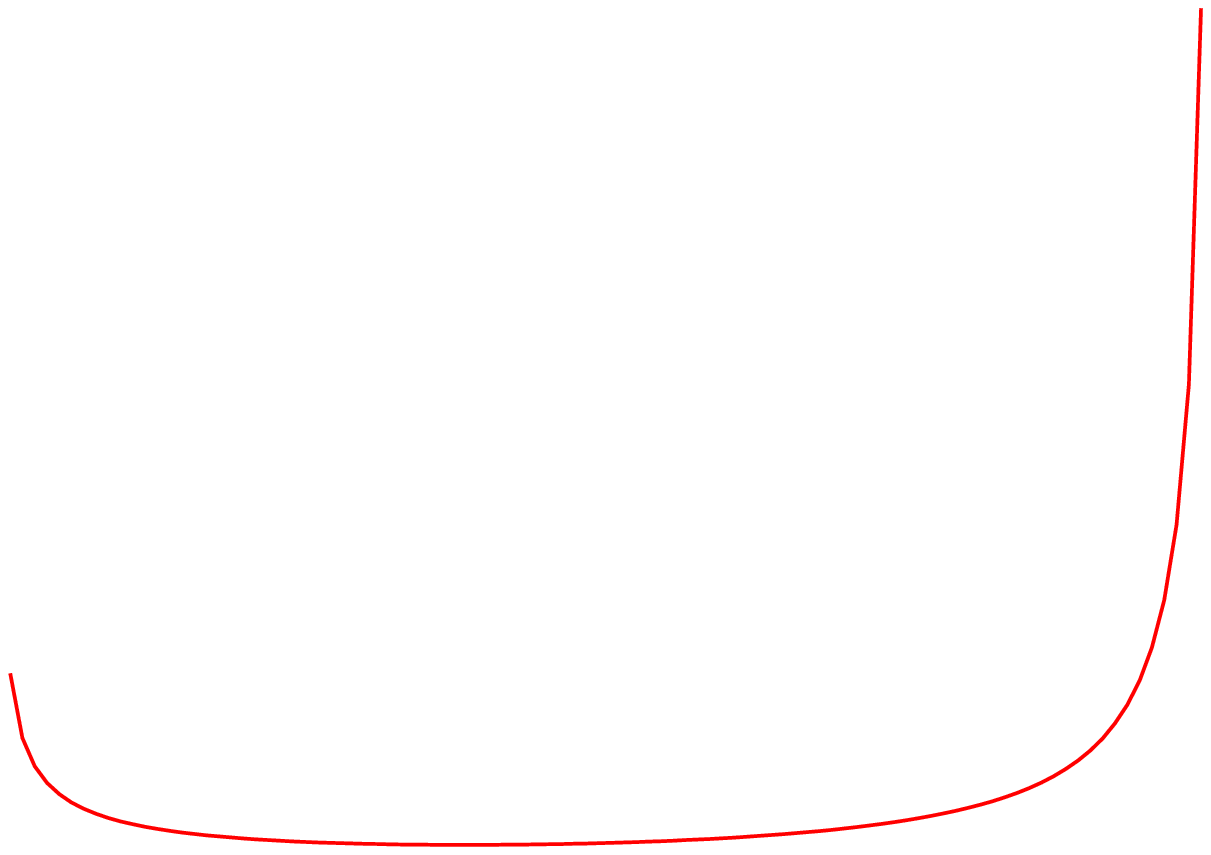}}%
    \gplfronttext
  \end{picture}%
\endgroup
}}
&\multirow{2}{*}{$\rho=0.5$} & \textbf{\numprint{2.7}\%} & \numprint{4.4}\% &
\numprint{7.5}\% &  \numprint{23}\\ 
& & \numprint{6.77}& \numprint{-5.14} & \numprint{-4.11} &
\numprint{0}\\ \cmidrule{2-6}
&\multirow{2}{*}{$\rho=-0.7$} & \textbf{\numprint{4.1}\%} & \numprint{16.9}\% &
\numprint{20.8}\% & \numprint{18.21}\\
& & \numprint{5.74} & \numprint{-8.8} & \numprint{-5.16} &
\numprint{0}\\\bottomrule

\end{tabular}
\caption{Performances of various predictors in the case of Figure~\ref{fig:simple_PW}.
The first line is mean $L^1$ error in \% of deviation from the optimal performance. The second
one is its bias. Bold values are the best performing choices.\label{tab:PW}}
\end{table}

Let us start with the simple case where the vector $\>X$ is just two random
real-valued variables (Figure~\ref{fig:simple_PW}). We repeat \numprint{100000} times
the decimation experiment. In this case, this experiment is just to observe
either $X_1$ or $X_2$ for a given outcome of the vector ($X_1,X_2$) and to predict
the other variable. In addition to the $L^1$ performance, we compute
the biases of the different estimators $\theta$
\begin{equation}\label{eq:biais}
\E_X\left[\theta(X)-\hat\theta_1(X)\right]. 
\end{equation}
We recall that $\hat \theta_1(X)$ is the optimal predictor w.r.t.\ the $L^1$ distance
i.e.\ the conditional median.

The results, for various values of $a$, $b$ and $\rho \egaldef \cov(Y_i,Y_j)$, are
given in Table~\ref{tab:PW}. The first line is the $L^1$
performance \eqref{eq:L1perf} and the second one the bias \eqref{eq:biais}.
Generally, with weak correlations all estimators have a satisfactory behavior.
However the best choice is the cdf function $\Lambda =F $ with the inverse mapping
$\GammaD = F^{-1}$. As expected, the conservative property of the decoding
$\GammaP$ is a real drawback in the case of strong correlation because it
prevents from predicting extreme values (see Figure~\ref{fig:Decod}). When
considering symmetric variables $X_i$, all estimators biases are close to $0$.
Even with $\beta(2,3)$ variables, this bias is negligible. In the case of
asymmetrical variables, these biases are clearly non zero, but do not prevent from
obtaining good performance.

\paragraph{Regular trees.} We consider here the case of a tree with a given
connectivity $n$ for interior nodes: each non leaf node has exactly $n$
neighbors. For $n=3$, we get a binary tree. We perform the decimation experiments and
results are presented in Figure~\ref{fig:Tree}. For the sake of comparison, we show
three other predictors: the median (in red), the $k$ nearest neighbors ($k$-NN,
\citet{Cover-KNN}) predictor (in orange) and the perfect predictor (in black), which
is obtained by computing the conditional mean of the vector $\>Y$. The $k$-NN predictor is
manually optimized to the $k=50$ nearest neighbors in the whole historical data used
to build the model. This predictor is known to be a good choice for road traffic data
(\citet{Smith2002}), but its complexity is too high for large networks
compared to BP\@. Moreover, it requires complete observations of the network, which are
not available when dealing with probe vehicles data.

As a general rule, the choice $(F,F^{-1})$ seems to be the best one.
Let us remark that, if we continue to increase the connectivity $n$, this situation
can change. In fact, at very high connectivity ($n\sim10$), the convergence of mBP
can be very slow. Non convergent cases can then impact the result and one should
rather use $\Lambda_\text{MI}$. Indeed, for the choice $\Lambda_\text{MI}$ the mBP
algorithm is strictly equivalent to BP\@. In this case, the BP algorithm is more stable
since it is always converging on trees. At this point, we discard the choice
($F,\GammaP$) which is clearly inferior to the other ones.
\begin{figure}
\centering
\resizebox{\textwidth}{!}{ \tiny\input{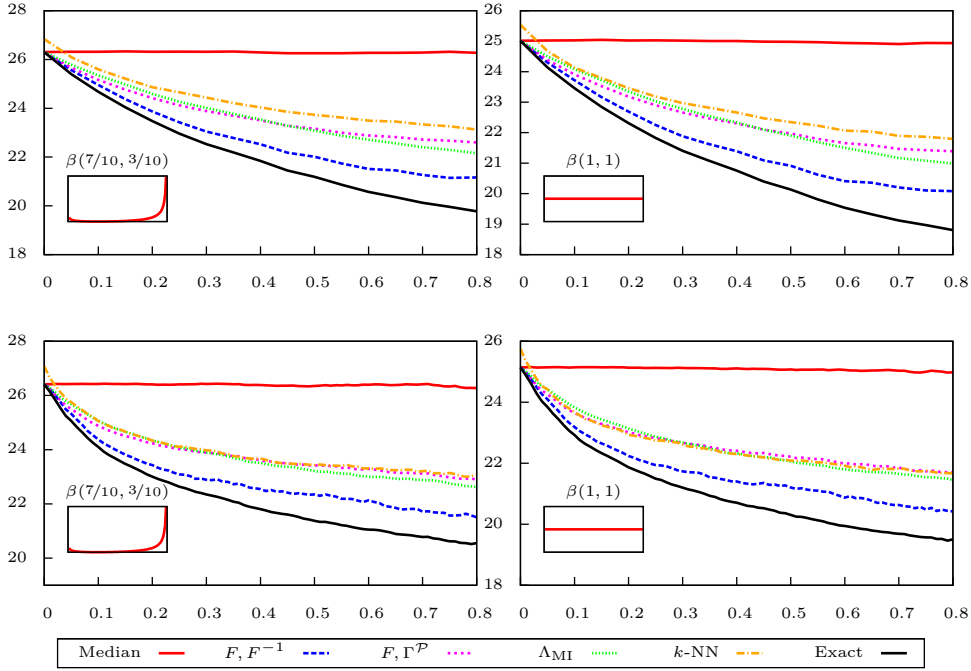}}
\caption{Mean $L^1$ prediction error of unobserved variables ($\times100$) as
a function of the proportion of revealed variables; the small embedded figures are
the corresponding pdf of the bêta variables. The connectivity is $n=3$ for top
figures and $n=5$ for the bottom ones. Each tree contains
\numprint{100} variables.\label{fig:Tree}}
\end{figure}

\begin{figure}
\centering\resizebox{0.5\textwidth}{!}{\begin{tikzpicture}[scale=1]
\begin{scope}

\draw[step=1cm,color=black,style=double,thick] (0 ,0) grid (7,7) ;
\path[-]  (0,0) edge[style=double,bend left,thick] (0,7);
\path[-]  (0,0) edge[style=double,bend right,thick] (7,0);
\path[-]  (7,0) edge[style=double,bend right,thick] (7,7);
\path[-]  (0,7) edge[style=double,bend left,thick] (7,7);

\end{scope} 
  
\end{tikzpicture}}
\caption{A simple model of urban road network with two-way streets.
  The inner grid represents the city itself and the $2\times 4$
  exterior edges form a ring road around it.\label{fig:square++}}
\end{figure}
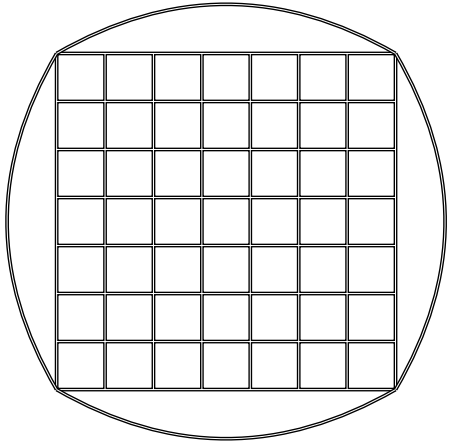

\begin{figure}
\centering
\resizebox{0.8\textwidth}{!}{\input{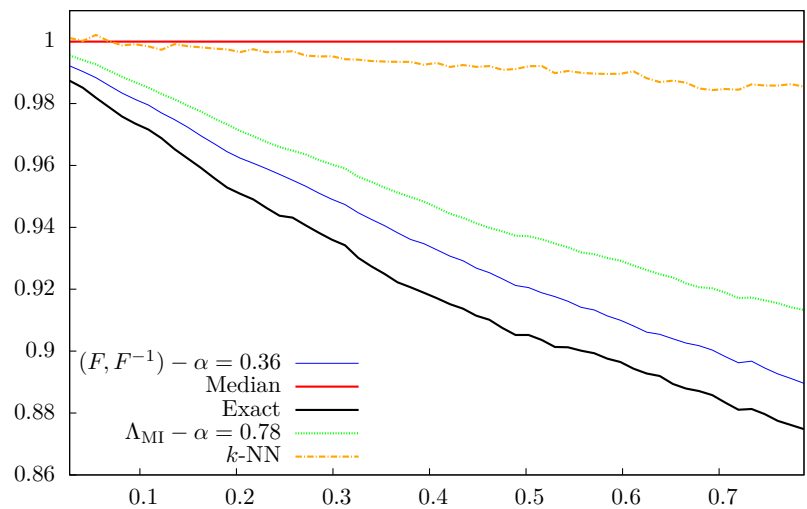}}
\caption{Mean $L^1$ prediction error on
unobserved variables, as a function of the proportion of revealed variables, for the
urban network of Figure~\ref{fig:square++}. All values are relative to
the error made by the ``median'' predictor.
\label{fig:CityModel}}
\end{figure}

\paragraph{A simple road network model.} Let us finally consider a new
synthetic model, associated to the road network of
Figure~\ref{fig:square++}, which is a very rough description of a city
network. The dependency graph of the vector $\>X$ is basically the
line graph of the road networks, i.e.\ there is a direct dependency
between edges $i$ and $j$ iff they are adjacent in the road network.
To model the impact of a ring road on its neighborhood, we set their
partial correlations with adjacent edges to \numprint{0.3}. The
marginal distributions of travel times are real data coming from the
Australian M4 motorway. We assume that the ring road links are always
observed, by means of specific equipment such as magnetic loops.

Again, the decimation experiment is performed \numprint{1000} times
and the results are presented in Figure~\ref{fig:CityModel}. Since the
ring road links are always observed, the decimation curve begin at
$\rho\sim\numprint{0.03}$. Note that, in this case, the k-NN predictor
performance is very bad,  due to the fact that correlations are
small compared to the vector dimension (\citet{Beyer}). The parameter
$\alpha$ of \eqref{eq:alphaBethe} is estimated with a dichotomy search
on $[0,1]$ up to a precision of \numprint{0.01}. Once again, the best
choice of encoding function is the cdf, which performs clearly better
than $\Lambda_\text{MI}$.

\section{Conclusion}\label{sec:ising_concl}

We proposed a simple way to model the interaction between real-valued
random variables defined over a graph from the following information:
\begin{itemize}
 \item the empirical cumulative distribution function of each variable;
 \item an incomplete covariance matrix.
\end{itemize}

The choice of the cdf as encoding function and its inverse as decoding function seems
to be the best one, as long as the graph connectivity is not too high. When this
connectivity increases too much, the algorithm mBP loose its efficiency and one
should rather choose the encoding function $\Lambda_\text{MI}$. An important but
potentially difficult task has been discarded here: finding the dependency graph
structure. This task can be performed using greedy heuristics
(see~\citet{Jalali,FuHaLaMa}) or $L^1$-based regularization method (see~\citet{Ravikumar})

Once the encoding/decoding functions are chosen and the marginals $p_{ij}$
have been estimated, many available methods exists to define the latent
Ising model, i.e. the set of Ising couplings. The best one will depend on the data and determining it will require
tests on real data. However, the results presented here make us quite
optimistic about applying this method to road traffic data, for which the
underlying binary description seems natural. 

Straightforward generalization of the approach presented here can be carried out to
construct latent variables with a feature space larger than $\{0,1\}$, by considering
additional random thresholds defined in Section~\ref{ssec:randthreshold} or deterministic ones; 
the underlying principles remain unchanged.
In particular, it is still possible to build decoding functions based on ML or Bayesian 
updating, to use the EM algorithm for pairwise distributions estimations and the mBP
algorithm for inference.

\bibliography{LatentIsing}
\bibliographystyle{abbrvnat}

\begin{appendix}

\section{Proof of Proposition \ref{prop:stability}}
Let us focus first on the case of one factor with two binary variables
$\sigma_i$ and $\sigma_j$, both observed (Figure~\ref{fig:longpp} with $n=2$).
The  messages
$m_{a \to i}$ are assumed to be normalized such that
\begin{equation*}
  \sum_{s_i} m_{a\to i}(s_i)= 1.
\end{equation*}
We introduce the following notation
\begin{equation*}
 u_n \egaldef m_{a \to i}(0),\quad v_n
\egaldef m_{a \to j}(0),
\end{equation*}
so that $1 - u_n = m_{a \to i}(1)$ and $1 - v_n = m_{a \to j}(1)$.
Using the update rules \eqref{urules}--\eqref{urulesn}, one obtains:
\begin{align}
\label{eq:up_u}
 u_{n+1} &= \frac{\psi_{00}\alpha_j\bar v_n + \psi_{01}\bar\alpha_jv_n}{(\psi_{
00} +
\psi_{10})\alpha_j\bar v_n + (\psi_{01}+\psi_{11} )\bar\alpha_jv_n},\\
\label{eq:up_v}
v_{n+1} &=  \frac{\psi_{00}\alpha_i\bar u_n + \psi_{10}\bar\alpha_iu_n}{(\psi_{
00} +
\psi_{01})\alpha_i\bar u_n + (\psi_{10}+\psi_{11} )\bar\alpha_iu_n}, 
\end{align}
where $\alpha_i \egaldef b^*_i(0)$, $\psi_{yz} \egaldef \psi(\sigma_i = y, 
\sigma_j =z)$ and using the convention $\bar z \egaldef 1 -z$.

\begin{lemma}
\label{lem:u_conv}
The sequences $(u_n)_{n \in \mathbb{N}}$ and $(v_n)_{n \in \mathbb{N}}$, defined
recursively by~\eqref{eq:up_u} and~\eqref{eq:up_v}, converge to a unique fixed 
point
for any $(u_0,v_0) \in ]0,1[^2$. 
\end{lemma}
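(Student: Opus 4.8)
The recursion \eqref{eq:up_u}--\eqref{eq:up_v} has a convenient product structure: denoting by $\phi(v)$ the right-hand side of \eqref{eq:up_u} and by $\psi(u)$ the right-hand side of \eqref{eq:up_v}, it reads $u_{n+1}=\phi(v_n)$, $v_{n+1}=\psi(u_n)$, so that the even and odd subsequences of each component obey a \emph{one}-dimensional iteration,
\[
 u_{n+2}=(\phi\circ\psi)(u_n),\qquad v_{n+2}=(\psi\circ\phi)(v_n).
\]
The plan is to reduce everything to the scalar map $h:=\phi\circ\psi$ (and, symmetrically, $\tilde h:=\psi\circ\phi$) on $[0,1]$: show that $h$ is a fractional-linear map sending $[0,1]$ into a compact subinterval of $]0,1[$, conclude that $h$ has a unique, globally attracting fixed point, and then reassemble the two subsequences.

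First I would record the elementary facts about $\phi$ and $\psi$. Assuming the pairwise potential is non-degenerate, $\psi_{yz}>0$ for all $y,z$, and $\alpha_i,\alpha_j\in\,]0,1[$ — the boundary cases $\alpha_i\in\{0,1\}$ or $\alpha_j\in\{0,1\}$ render one of the two sequences constant from step $1$ on and are trivial — the denominators in \eqref{eq:up_u}--\eqref{eq:up_v} stay bounded away from $0$ on $[0,1]$, so $\phi$ and $\psi$ are fractional-linear maps with no pole in $[0,1]$, hence strictly monotone there. Evaluating at the endpoints gives $\phi(0)=\psi_{00}/(\psi_{00}+\psi_{10})$ and $\phi(1)=\psi_{01}/(\psi_{01}+\psi_{11})$ (with analogous formulas for $\psi$), all lying in $]0,1[$; thus $\phi([0,1])$ and $\psi([0,1])$ are compact subintervals strictly inside $]0,1[$. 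Composing, $h=\phi\circ\psi$ and $\tilde h=\psi\circ\phi$ are fractional-linear self-maps of $[0,1]$ whose images are compact subintervals of $]0,1[$.

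Next I would show that any fractional-linear self-map $h$ of $[0,1]$ with $h([0,1])$ a compact subinterval of $]0,1[$ has a unique fixed point $u^\star\in\,]0,1[$ attracting every orbit in $[0,1]$. If $h$ is order-preserving, the numerator of $h(x)-x$ is a quadratic (possibly degenerating to an affine function) that is positive at $0$ and negative at $1$; a short discussion of its sign and concavity shows that it has exactly one root $u^\star$ in $[0,1]$, with $h(x)>x$ for $x<u^\star$ and $h(x)<x$ for $x>u^\star$, so every orbit is eventually monotone, bounded, and converges to $u^\star$. If $h$ is order-reversing, then $h^2$ is an order-preserving fractional-linear self-map of $[0,1]$ whose image is still compactly inside $]0,1[$, so the previous case gives it a unique attracting fixed point $u^\star$; since $x\mapsto h(x)-x$ is strictly decreasing on $[0,1]$ it vanishes exactly once, necessarily at this same $u^\star$, and the two interlaced subsequences $\bigl(h^{2k}(u)\bigr)_k$ and $\bigl(h^{2k+1}(u)\bigr)_k$ then both converge to it, so the whole orbit does too. (Alternatively, global attractivity follows at once from Birkhoff's theorem: a projective map carrying $[0,1]$ into a compact subinterval is a strict contraction for the Hilbert metric of $]0,1[$.)

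Finally I would put the pieces together. Applying the previous step to $h$ yields $u_{2n}\to u^\star$ and $u_{2n+1}\to u^\star$ — the same limit, since $h$ has a \emph{unique} fixed point — hence $u_n\to u^\star$; likewise $v_n\to v^\star$, the unique fixed point of $\tilde h$. Letting $n\to\infty$ in $u_{n+1}=\phi(v_n)$ and $v_{n+1}=\psi(u_n)$ shows $(u^\star,v^\star)$ is a fixed point of the joint map, and any fixed point $(u,v)$ satisfies $u=h(u)$, forcing $u=u^\star$ and then $v=\psi(u^\star)=v^\star$; so the fixed point is unique and is reached from every $(u_0,v_0)\in\,]0,1[^{\,2}$. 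The one genuinely technical point in this programme is the sign/concavity argument guaranteeing that $h$ (resp.\ $h^2$) has a \emph{single} fixed point in $[0,1]$ — equivalently, that an order-reversing $h$ admits no $2$-cycle; everything else is bookkeeping.
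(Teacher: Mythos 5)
Your proof is correct, and its skeleton coincides with the paper's: reduce to the two-step scalar recursion $u_{n+2}=h(u_n)$ where $h$ (your $\phi\circ\psi$, the paper's $f$) is fractional linear, note that its fixed points are roots of a polynomial of degree at most two, and split according to the monotonicity of $h$. In the increasing case your endpoint-sign argument (the numerator of $h(x)-x$ is positive at $0$ and negative at $1$, hence has exactly one root in $]0,1[$) is just a compact form of the paper's parity-of-crossings argument, and concavity is not even needed. The genuine difference is the decreasing case: you pass to $h^2$, which is again fractional linear, order preserving, and maps $[0,1]$ into a compact subinterval of $]0,1[$, so the increasing-case analysis gives it a unique globally attracting fixed point; this simultaneously excludes $2$-cycles and identifies the limit with the unique fixed point of $h$, after which the interlacing of the even and odd subsequences finishes the argument. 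The paper instead invokes the substitution $u\mapsto 1-u$ and asserts that the induced map $g(y)=1-f(1-y)$ has positive derivative; since $g'(y)=f'(1-y)$, this substitution does not in fact reverse monotonicity, so your $h^2$ route is the more robust and self-contained way to close this case (and your Birkhoff/Hilbert-metric remark gives an even shorter global-contraction proof, since $h$ maps $[0,1]$ strictly inside itself). Two minor points: your blanket assumptions $\psi_{yz}>0$ and $\alpha_i,\alpha_j\in\,]0,1[$ play exactly the role of the paper's explicit trivial cases ($f(0)=0$, $f(1)=1$, etc.), so the level of generality is the same; and ``strictly monotone'' should read ``monotone or constant'', since $\phi$ or $\psi$ degenerates to a constant when $\psi_{00}\psi_{11}=\psi_{01}\psi_{10}$ --- a case in which convergence is immediate and which your remark about the quadratic degenerating already absorbs.
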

\begin{proof}
Since the roles of $u_n$ or $v_n$ are symmetric, we will only prove the convergence
of $u_n$. From~\eqref{eq:up_u} and~\eqref{eq:up_v}, we obtain a recursive 
equation of the form $u_{n+2} =f(u_n)$ such as
\begin{equation*}
 f(x) = \frac{h_0 x+K_0}{(h_0+h_1)x + (K_0+K_1)}, 
\end{equation*}
with
\begin{align*}
 h_0 &\egaldef \psi_{00}\alpha_j(\bar \alpha_i \psi_{11} - \alpha_i\psi_{01}) +
\psi_{01}\bar\alpha_j(\psi_{10}\bar\alpha_i - \psi_{00}\alpha_i),\\
 h_1 &\egaldef \psi_{10}\alpha_j(\bar \alpha_i \psi_{11} - \alpha_i\psi_{01}) +
\psi_{11}\bar\alpha_j(\psi_{10}\bar\alpha_i - \psi_{00}\alpha_i),\\
K_0 &\egaldef \psi_{00}\psi_{01}\alpha_i, \quad K_1 \egaldef \alpha_i (
\psi_{10}\psi_{01}\alpha_j + \psi_{11}\psi_{00}\bar \alpha_j).
\end{align*}
The derivative of $f$ is
\begin{equation*} f'(x) =  \frac{h_0 K_1 - h_1 K_0}{\bigl( (h_0+h_1)x + (K_0+K_1)
\bigr)^2},\end{equation*}
which is of constant sign. If $f'(x) \geq 0$, then $u_{2n}$ and
$u_{2n+1}$ are monotonic, and, because $u_n$ is bounded, we can
conclude that both $u_{2n}$ and $u_{2n+1}$ converge. If we could prove
that there is a unique fixed point in the interval $[0,1]$, we would
have proved that $u_n$ converges.

Let us begin by discarding some trivial cases. First the case $f(1) =
1$ implies that $\alpha_i = 1$ and
\begin{align*}
 h_0 &= -\psi_{00}\psi_{01} = -K_0, \\
 h_1 &= -\psi_{10}\psi_{01}\alpha_j - \psi_{11}\psi_{00}\bar\alpha_j =
 -K_1,
\end{align*}
which leads to $f$ being a constant function equal to
$\frac{K_0}{K_0+K_1}$. When $f(0) = 0$, one has $\alpha_i = K_0 = K_1
= 0$, and $f$ is again constant. The cases $f(1)=0$ and $f(0)=1$ are
treated similarly and $f$ is still a constant function, which implies
the trivial convergence of $u_n$.

\paragraph{Case 1: $f$ is increasing}
At least one fixed point exists in $[0,1]$ since $f([0,1])\!\subset [0,1]$.
Studying the roots of $f(x)-x$ shows that the number of fixed points is at most 
$2$ since these fixed points are roots of a degree $2$ polynomial.

Since $f(0)>0$, $f$ being increasing and $f(1)<1$ the number of fixed points 
has to be odd, indeed the graph of $f$ must cross an odd number of times the first 
bisector. One can conclude that there is only one fixed point in $[0,1]$, so both
$u_{2n}$ and $u_{2n+1}$ converge to the same fixed point.

\paragraph{Case 2: $f$ is decreasing}
We just have to consider the sequence $(1-u_n)_{n \in \mathbb{N}}$, which is 
similar, but will be defined by recurrence of the form $1-u_{n+2} = g(1-u_n)$ with a 
function $g$ such as $g'$ is positive and the result of Case~1 applies.
\end{proof}

The case we just studied is in fact much more general than it looks. Indeed, as soon
as a tree gets stuck between exactly two nodes with fixed beliefs, the situation is
equivalent and leads to the result of Proposition \ref{prop:stability}.

\begin{proof}[Proof of Proposition \ref{prop:stability}]
First it is trivial to see that fixing the beliefs of a set of nodes
$\V^* \subset \V$ has the effect of the graph cutting
$\mathcal{T}(\cdot,\V^*)$ in term of messages propagation. To conclude
the proof, it is enough to focus on proving the convergence on a tree with two
leaves in $\V^*$. Consider the tree of Figure~\ref{fig:longpp}; one
can show that it is equivalent to the case of Lemma~\ref{lem:u_conv} for a
well chosen function $\psi$.
\begin{figure}
\setlength{\unitlength}{1mm}
\begin{picture}(50, 15)(-8,-5)
  \put(0,0){\circle*{2}} 
  \put(-1,-4){$\sigma_1$}
  \put(1,0){\line(1,0){8}} 
  \put(9,-2.5){\framebox(10,5){$\psi_{a_1}$}}
  \put(19,0){\line(1,0){8}}
  \put(28,0){\circle{2}} 
  \put(27,-4){$\sigma_2$}
  \put(29,0){\line(1,0){8}}
  \put(37,-2.5){\framebox(10,5){$\psi_{a_2}$}}
  \multiput(48,0)(2,0){4}{\line(1,0){1}} 
  \put(56,-2.5){\framebox(10,5){$\psi_{\!a_{N\!-\!2}}$}}
  \put(66,0){\line(1,0){8}}
  \put(75,0){\circle{2}}
  \put(74,-4){$\sigma_{N\!-\!1}$}
  \put(76,0){\line(1,0){8}}
  \put(84,-2.5){\framebox(10,5){$\psi_{a_{N\!-\!1}}$}}
  \put(94,0){\line(1,0){8}}
  \put(103,0){\circle*{2}}
  \put(102,-4){$\sigma_N$}
\end{picture}
\caption{Chain of $N$ pairwise factors, the extremal variables
$\sigma_1$ and $\sigma_N$ are observed.}
\label{fig:longpp}
\end{figure}
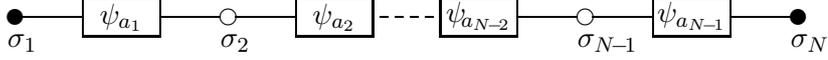
Propagating the updates rules yields $m_{a_1 \to 1}(s_1) \leftarrow \Theta$, 
with 
\begin{equation*}
\Theta \propto \sum_{s_N} \Bigl(\sum_{s_1\ldots s_{N\!-\!2}}
\prod_{i=1}^{N-2}\psi_{a_i}(\mathbf{s}_{a_i}
)\phi_i(s_i)\Bigr)\frac{\psi_{a_{N\!-\!1}}(\mathbf{s}_{a_{N\!-\!1}})
b^*_r(s_N)}{m_{a_{N\!-\!1} \to
N}(s_N)}. 
\end{equation*}
We define $\tilde \psi$ such as
$$\tilde \psi(s_1,s_N) =
\Bigl(\sum_{s_1\ldots s_{N\!-\!2}}\prod_{i=1}^{N-2}\psi_{a_i}(\mathbf{s}_{a_i}
)\phi_i(s_i)\Bigr)\  \psi_{a_{N\!-\!1}}(\mathbf{s}_{a_{N\!-\!1}}),$$
then we use the results of Lemma~\ref{lem:u_conv} to obtain the convergence of
messages on this tree. In the general case of a tree with two leaves in $\V^*$, 
leaves fixed on variables $\sigma_i, i \in \{1\ldots N\}$ will simply 
affect the local fields $\phi_i$. The leaves fixed on factors $a_i$ will affect the
functions $\psi_{a_i}$. In fact, since the graph is a tree, we know that the
information sent by $\sigma_1$ and $\sigma_N$ to these leaves will not come 
back to $\sigma_1$ and $\sigma_N$. These leaves send constant messages, which can be
integrated into the functions $\psi$ and $\phi$, in order to recover the setting
 of Lemma~\ref{lem:u_conv}.
\end{proof}

\end{appendix}

\end{document}